\newtheorem{thm}{Theorem}[section]
\newtheorem{lem}[thm]{Lemma}
\newtheorem{corollary}[thm]{Corollary}
\newtheorem{proposition}[thm]{Proposition}
\newtheorem{conjecture}[thm]{Conjecture}
\theoremstyle{definition}
\newtheorem{notation}[thm]{Notation}
\def\O{\mathcal{O}}
\def\P{\mathcal{P}}
\def\T{\mathcal{T}}
\def\C{\mathscr{C}}
\def\Ll{\mathscr{L}}
\def\M{\mathscr{M}}
\def\N{\mathscr{N}}
\def\PG{\mathrm{PG}}
\def\RC{\mathrm{RC}}
\def\RA{\mathrm{RA}}
\def\Tr{\mathrm{T}}
\def\IC{\mathrm{IC}}
\def\IA{\mathrm{IA}}
\def\UG{\mathrm{U\Gamma}}
\def\UnG{\mathrm{Un\Gamma}}
\def\EG{\mathrm{E\Gamma}}
\def\EnG{\mathrm{En\Gamma}}
\def\Ar{\mathrm{A}}
\def\EA{\mathrm{EA}}
\def\MM{\mathbf{M}}
\def\Pf{\mathbf{P}}
\def\F{\mathbb{F}}
\def\A{\mathfrak{A}}
\def\Lk{\mathfrak{L}}
\def\Pk{\mathfrak{P}}
\def\pk{\mathfrak{p}}
\def\a{\alpha}
\def\t{\text}
\def\db{\displaybreak[3]}
\def\dbn{\displaybreak[3]\notag}
\def\nt{\notag}
\begin{document}
\title{Twisted cubic and orbits of lines\newline in $\mathrm{PG}(3,q)$,~II
\date{}
}

\maketitle
\begin{center}
{\sc Alexander A. Davydov
\footnote{A.A. Davydov ORCID \url{https://orcid.org/0000-0002-5827-4560}}
}\\
{\sc\small Institute for Information Transmission Problems (Kharkevich institute)}\\
 {\sc\small Russian Academy of Sciences}\\
 {\sc\small Moscow, 127051, Russian Federation}\\\emph {E-mail address:} adav@iitp.ru\medskip\\
 {\sc Stefano Marcugini
 \footnote{S. Marcugini ORCID \url{https://orcid.org/0000-0002-7961-0260}},
 Fernanda Pambianco
 \footnote{F. Pambianco ORCID \url{https://orcid.org/0000-0001-5476-5365}}
 }\\
 {\sc\small Department of  Mathematics  and Computer Science,  Perugia University,}\\
 {\sc\small Perugia, 06123, Italy}\\
 \emph{E-mail address:} \{stefano.marcugini, fernanda.pambianco\}@unipg.it
\end{center}

\textbf{Abstract.}
In the projective space $\mathrm{PG}(3,q)$, we consider the orbits of lines under the stabilizer group of the twisted cubic. In the literature, lines of $\mathrm{PG}(3,q)$ are partitioned into classes, each of which is a union of line orbits.  In this paper, all classes of lines consisting of a unique orbit are found. For the remaining line types, with one exception, it is proved that they consist exactly of two or three orbits; sizes and structures of these orbits are determined. Also, the subgroups of the stabilizer group of the twisted cubic fixing lines of the orbits are obtained. Problems which remain open for one type of lines are formulated and, for $5\le q\le37$ and $q=64$, a solution is provided.

\textbf{Keywords:} Twisted cubic, Projective space, Orbits of lines

\textbf{Mathematics Subject Classification (2010).} 51E21, 51E22

\section{Introduction}
Let $\PG(N,q)$ be the $N$-dimensional projective space over the Galois field~$\F_q$ with $q$ elements. An $n$-arc in  $\PG(N,q)$, with $n\ge N + 1\ge3$, is a set of $n$ points such that no $N +1$ points belong to the same hyperplane of $\PG(N,q)$. An $n$-arc is complete if it is not contained in an $(n+1)$-arc. For an introduction to projective geometry over finite fields see \cite{Hirs_PGFF,Segre}.

In $\PG(N,q)$, $2\le N\le q-2$, a normal rational curve is any $(q+1)$-arc projectively equivalent to the arc $\{(t^N,t^{N-1},\ldots,t^2,t,1):t\in \F_q\}\cup \{(1,0,\ldots,0)\}$. In $\PG(3,q)$, the normal rational curve is called a  \emph{twisted cubic} \cite{Hirs_PG3q}. The twisted cubic has important connections with a number of other combinatorial objects. This prompted the  twisted cubic to be widely studied, see e.g.
\cite{BonPolvTwCub,BrHirsTwCub,CasseGlynn82,CasseGlynn84,CosHirsStTwCub,GiulVincTwCub,Hirs_PG3q,KorchLanzSon,ZanZuan2010} and the references therein.

In \cite{Hirs_PG3q}, the orbits of planes, lines and points in $\PG(3,q)$ under the group $G_q$ of the projectivities fixing the twisted cubic are considered.
The sets of planes and points are partitioned into separate orbits, whereas the set of lines is partitioned into unions of orbits called {\em classes}, see Section \ref{sec_prelimin}.

The beginning of our research on the twisted cubic is the content of a paper appeared in 2020 \cite{BDMP-TwCub}  where, in collaboration with D. Bartoli, for all $q\ge2$, the structure of the \emph{point-plane} incidence matrix of $\PG(3,q)$, using point and plane orbits under $G_q$ of \cite{Hirs_PG3q}, is described. Using this matrix, we obtained in \cite{BDMP-TwCub} optimal multiple covering codes.
The results of \cite{BDMP-TwCub} allowed us to classify the cosets of the $[ q+1, q-3,5]_q 3$ generalized doubly-extended Reed-Solomon code of codimension 4 by their weight distributions \cite{DMP_CosetsRScod4}.
In the continuation of our research, aimed at determining the plane-line and point-line incidence matrices, we realized that more information on orbits was needed than that available in the literature.

In this paper, we consider the orbits of lines in $\PG(3,q)$ under the stabilizer group $G_q$ of the twisted cubic. We use the aforementioned partitions of lines into unions of orbits under $G_q$ described in \cite{Hirs_PG3q}. All classes of lines consisting of a unique orbit are found. For the remaining line classes, with the only exception of the class called $\O_6$ in \cite{Hirs_PG3q}, it is proved that they are a union of exactly two or three orbits; sizes and structures of these orbits are determined. Also, subgroups of $G_q$ fixing the lines of the orbits (apart from the lines of $\O_6$) are obtained. Problems which are still open for the class $\O_6$ of lines are formulated. For  $5\le q\le37$ and $q=64$, they are solved.

The theoretic results hold for $q\ge5$. For $q = 2,3,4$ we describe the orbits by computer search.

This paper contains all the results of \cite{DMP_OrbLine}, posted in arXiv.org. In addition, here we obtain the subgroups of $G_q$ fixing the lines of the orbits not contained in $\O_6$. We remark that the numbers and sizes of the line orbits in $\PG(3,q)$ (apart from the class $\O_6$) have been independently obtained in two papers subsequently appeared in ArXiv: \cite{BlokPelSzo} (for all $q\ge23$) and  \cite{GulLav} (for odd $q\not\equiv0\pmod3$). In the noted manuscripts methods that are different from ours are used. The representation and description of the orbits in \cite{BlokPelSzo} and \cite{GulLav} also are distinct from those of this paper. We give the orbits in a form which is convenient for the investigations in \cite{DMP_IPlLineIncarX} and \cite{DMP_PointLineInc}, where plane-line and point line incidence matrices of $\PG(3,q)$  are obtained.

The paper is organized as follows. Section \ref{sec_prelimin} contains preliminaries. In Section \ref{sec_mainres}, the main results of this paper are summarized.  In Sections \ref{sec:nul_pol}--\ref{sec:orbEA}, orbits of lines in $\PG(3,q)$ under $G_q$ and the subgroups of $G_q$ fixing the lines of the orbits are considered. In Section \ref{sec:classification}, open problems are formulated and their solutions for $5\le q\le37$ and $q=64$ are considered.

\section{Preliminaries }\label{sec_prelimin}
In this section we summarize
 the results on the twisted cubic of \cite{Hirs_PG3q} useful in this paper.

Let $\Pf(x_0,x_1,x_2,x_3)$ be a point of $\PG(3,q)$ with homogeneous coordinates $x_i\in\F_{q}$.
 Let $\F_{q}^*=\F_{q}\setminus\{0\}$, $\F_q^+=\F_q\cup\{\infty\}$. For
For $t\in\F_q^+$, let  $P(t)$ be a point such that
\begin{align}\label{eq2:P(t)}
  P(t)=\Pf(t^3,t^2,t,1)\text{ if }t\in\F_q;~~P(\infty)=\Pf(1,0,0,0).
\end{align}

Let $\C\subset\PG(3,q)$ be the \emph{twisted cubic} consisting of $q+1$ points\\
 $P_1,\ldots,P_{q+1}$  no four of which are coplanar.
We consider $\C$ in the canonical form
\begin{align}\label{eq2_cubic}
&\C=\{P_1,P_2,\ldots,P_{q+1}\}=\{P(t)\,|\,t\in\F_q^+\}.
\end{align}

A \emph{chord} of $\C$ is a line through a pair of real points of $\C$ or a pair of complex conjugate points. In the last case, it is an \emph{imaginary chord}. If the real points are distinct, it is a \emph{real chord}; if they coincide with each other, it is a \emph{tangent.}

Let $\boldsymbol{\pi}(c_0,c_1,c_2,c_3)$ $\subset\PG(3,q)$, be the plane with equation
\begin{align}\label{eq2_plane}
  c_0x_0+c_1x_1+c_2x_2+c_3x_3=0,~c_i\in\F_q.
\end{align}
The \emph{osculating plane} in the  point $P(t)\in\C$ is as follows:
\begin{align}\label{eq2_osc_plane}
&\pi_\t{osc}(t)=\boldsymbol{\pi}(1,-3t,3t^2,-t^3)\t{ if }t\in\F_q; ~\pi_\t{osc}(\infty)=\boldsymbol{\pi}(0,0,0,1).
\end{align}
 The $q+1$ osculating planes form the osculating developable $\Gamma$ to $\C$, that is a \emph{pencil of planes} for $q\equiv0\pmod3$ or a \emph{cubic developable} for $q\not\equiv0\pmod3$.

 An \emph{axis} of $\Gamma$ is a line of $\PG(3,q)$ which is the intersection of a pair of real planes or complex conjugate planes of $\Gamma$. In the last  case, it is a \emph{generator} or an \emph{imaginary axis}. If the real planes are distinct it is a \emph{real axis}; if they coincide with each other it is a \emph{tangent} to $\C$.

For $q\not\equiv0\pmod3$, the null polarity $\A$ \cite[Sections 2.1.5, 5.3]{Hirs_PGFF}, \cite[Theorem~21.1.2]{Hirs_PG3q} is given by
\begin{align}\label{eq2_null_pol}
&\Pf(x_0,x_1,x_2,x_3)\A=\boldsymbol{\pi}(x_3,-3x_2,3x_1,-x_0).
\end{align}

\begin{notation}\label{notation_1}
In future, we consider $q\equiv\xi\pmod3$ with $\xi\in\{-1,0,1\}$. Many values depend of $\xi$ or have sense only for specific $\xi$.
If it is not clear by the context, we note this by remarks.
The following notation is used.
\begin{align*}
  &G_q && \t{the group of projectivities in } \PG(3,q) \t{ fixing }\C;\db  \\
&A^{tr}&&\t{the transposed matrix of }A;\db \\
&\#S&&\t{the cardinality of a set }S;\db\\
&\overline{AB}&&\t{the line through the points $A$ and }B;\db\\
&\triangleq&&\t{the sign ``equality by definition"}.\db\\
&P_t && \t{the point } P(t) \t{ of } \C \t{ with } t\in\F_q^+, \t{ cf. } \eqref{eq2:P(t)}, \eqref{eq2_cubic};\db  \\
&\T_t  && \t{the tangent line to } \C \t{ at the point } \P_t;\db  \\
&&&\t{\textbf{Types $\pi$ of planes:}}\db\\
&\Gamma\t{-plane}  &&\t{an osculating plane of }\Gamma;\db \\
&d_\C\t{-plane}&&\t{a plane containing \emph{exactly} $d$ distinct points of }\C,~d=0,2,3;\db \\
&\overline{1_\C}\t{-plane}&&\t{a plane not in $\Gamma$ containing \emph{exactly} 1 point of }\C;\db \\
&\Pk&&\t{the list of possible types $\pi$ of planes},~\Pk\triangleq\{\Gamma,2_\C,3_\C,\overline{1_\C},0_\C\};\db\\
&\pi\t{-plane}&&\t{a plane of the type }\pi\in\Pk; \db\\
&\N_\pi&&\t{the orbit of $\pi$-planes under }G_q,~\pi\in\Pk.\db\\
&&&\t{\textbf{Types $\lambda$ of lines with respect to  the twisted cubic $\C$:}}\db\\
&\RC\t{-line}&&\t{a real chord  of $\C$;}\db \\
&\RA\t{-line}&&\t{a real axis of $\Gamma$ for }\xi\ne0;\db \\
&\Tr\t{-line}&&\t{a tangent to $\C$};\db \\
&\IC\t{-line}&&\t{an imaginary chord  of $\C$;}\db \\
&\IA\t{-line}&&\t{an imaginary axis of $\Gamma$ for }\xi\ne0;\db \\
&\UG&&\t{a non-tangent unisecant in a $\Gamma$-plane;}\db \\
&\t{Un$\Gamma$-line}&&\t{a unisecant not in a $\Gamma$-plane (it is always non-tangent);}\db \\
&\t{E$\Gamma$-line}&&\t{an external line in a $\Gamma$-plane (it cannot be a chord);}\db \\
&\t{En$\Gamma$-line}&&\t{an external line, other than a chord, not in a $\Gamma$-plane;}\db \\
&\Ar\t{-line}&&\t{the axis of $\Gamma$ for }\xi=0\db\\
&&&\t{(it is the single line of intersection of all the $q+1~\Gamma$-planes)};\db \\
&\EA\t{-line}&&\t{an external line meeting the axis of $\Gamma$ for }\xi=0;\db\\
&\Lk&&\t{the list of possible types $\lambda$ of lines},\db\\
&&&\Lk\triangleq\{\RC,\RA,\Tr,\IC,\IA,\UG,\UnG,\EG,\EnG\}\t{ for }\xi\ne0,\db\\
&&&\Lk\triangleq\{\RC,\Tr,\IC,\UG,\UnG, \EnG,\Ar,\EA\}\t{ for }\xi=0;\db\\
&\lambda\t{-line}&&\t{a line of the type }\lambda\in\Lk;\db\\
&\O_\lambda&&\t{the union (class) of all orbits of $\lambda$-lines under }G_q,~\lambda\in\Lk.\db\\
&&&\t{\textbf{Types of points with respect to the twisted cubic $\C$:}}\db\\
&\C\t{-point}&&\t{a point  of }\C;\db\\
&\mu_\Gamma\t{-point}&&\t{a point  off $\C$ lying on \emph{exactly} $\mu$ distinct osculating planes,}\db\\
&&&\mu_\Gamma\in\{0_\Gamma,1_\Gamma,3_\Gamma\}\t{ for }\xi\ne0,~\mu_\Gamma\in\{(q+1)_\Gamma\}\t{ for }\xi=0;\db\\
&\Tr\t{-point}&&\t{a point  off $\C$  on a tangent to $\C$ for }\xi\ne0;\db\\
&\t{TO-point}&&\t{a point  off $\C$ on a tangent and one osculating plane for }\xi=0;\db\\
&\RC\t{-point}&&\t{a point  off $\C$  on a real chord;}\db\\
&\IC\t{-point}&&\t{a point  on an imaginary chord (it is always off $\C$).}
\end{align*}
\end{notation}

The following theorem summarizes results from \cite{Hirs_PG3q} useful in this paper.
\begin{thm}\label{th2_Hirs}
\emph{\cite[Chapter 21]{Hirs_PG3q}} The following properties of the twisted cubic $\C$ of \eqref{eq2_cubic} hold:
  \begin{align}
  &\textbf{\emph{(i)}} \t{ The group $G_q$ acts triply transitively on }\C.
  ~ G_q\cong PGL(2,q)~\t{ for }q\ge5;\dbn \\
&\t{ The matrix $\MM$ corresponding to a projectivity of $G_q$ has the general form}\dbn\\
& \label{eq2_M} \mathbf{M}=\left[
 \begin{array}{cccc}
 a^3&a^2c&ac^2&c^3\\
 3a^2b&a^2d+2abc&bc^2+2acd&3c^2d\\
 3ab^2&b^2c+2abd&ad^2+2bcd&3cd^2\\
 b^3&b^2d&bd^2&d^3
 \end{array}
  \right],a,b,c,d\in\F_q,~ ad-bc\ne0.
\end{align}

\textbf{\emph{(ii)}} Under $G_q$, $q\ge5$, there are five orbits of planes and five orbits of points.

\textbf{\emph{(a)}} For all $q$, the orbits $\N_i$ of planes are as follows:
\begin{align}\label{eq2_plane orbit_gen}
   &\N_1=\N_\Gamma=\{\Gamma\t{-planes}\},~~~~\#\N_\Gamma=q+1;\db\\
   &\N_{2}=\N_{2_\C}=\{2_\C\t{-planes}\}, ~\#\N_{2_\C}=q^2+q;\dbn \\
 &\N_{3}=\N_{3_\C}=\{3_\C\t{-planes}\},~  \#\N_{3_\C}=(q^3-q)/6;\dbn\\
 &\N_{4}=\N_{\overline{1_\C}}=\{\overline{1_\C}\t{-planes}\},~\#\N_{\overline{1_\C}}=(q^3-q)/2;\dbn\\
 &\N_{5}=\N_{0_\C}=\{0_\C\t{-planes}\},~\#\N_{0_\C}=(q^3-q)/3.\nt
 \end{align}

\textbf{\emph{(b)}} For $q\not\equiv0\pmod 3$, the orbits $\M_j$ of points are as follows:
\begin{align*}
&\M_1=\{\C\t{-points}\},~ \M_2=\{\Tr\t{-points}\},~\M_3=\{3_\Gamma\t{-points}\},,\db\\
&\M_4=\{1_\Gamma\t{-points}\},~\M_5=\{0_\Gamma\t{-points}\}.\db\\
&\t{Also, if } q\equiv1\pmod 3 \t{ then } \M_{3}\cup\M_{5}=\{\RC\t{-points}\}, ~\M_{4}=\{\IC\t{-points}\};\db\\
 &\t{if } q\equiv-1\pmod 3\t{ then }\M_{3}\cup\M_{5}=\{\IC\t{-points}\},~ \M_{4}=\{\RC\t{-points}\}.
\end{align*}

\textbf{\emph{(c)}} For $q\equiv0\pmod 3$, the orbits $\M_j$ of points are as follows:
\begin{align*}
&\M_1=\{\C\t{-points}\},~\M_2=\{(q+1)_\Gamma\t{-points}\},~\M_3=\{\t{\emph{TO}-points}\},\db\\
&\M_4=\{\RC\t{-points}\},~ \M_5=\{\IC\t{-points}\}.
\end{align*}

 \textbf{\emph{(iii)}} For $q\not\equiv0\pmod3$, the null polarity $\A$ \eqref{eq2_null_pol} interchanges $\C$ and $\Gamma$ and their corresponding chords and axes.

 \textbf{\emph{(iv)}} The lines of $\PG(3,q)$ can be partitioned into classes called $\O_i$ and $\O'_i$, each of which is a union of orbits under $G_q$.
  \begin{align}
  &\hspace{0.6cm}\textbf{\emph{(a)}}~ q\not\equiv0\pmod3,~ q\ge5, ~\O'_i=\O_i\A,~ \#\O'_i=\#\O_i,~i=1,\ldots,6.\dbn\\
  &\O_1=\O_\RC=\{\RC\t{-lines}\},~\O'_1=\O_\RA=\{\RA\t{-lines}\},\db\label{eq2_classes line q!=0mod3}\\
  &\#\O_\RC=\#\O_\RA=(q^2+q)/2;\dbn\\
  &\O_2=\O'_2=\O_\Tr=\{\Tr\t{-lines}\},~\#\O_\Tr=q+1;\dbn \\
  &\O_3=\O_\IC=\{\IC\t{-lines}\},~\O'_3=\O_\IA=\{\IA\t{-lines}\},\dbn\\
  &\#\O_\IC=\#\O_\IA=(q^2-q)/2;\dbn\\
  &\O_4=\O'_4=\O_\UG=\{\UG\t{-lines}\},~\#\O_\UG=q^2+q;\dbn\\
  &\O_5=\O_\UnG=\{\UnG\t{-lines}\},\O'_5=\O_\EG=\{\EG\t{-lines}\},\dbn\\
  &\#\O_\UnG=\#\O_\EG=q^3-q;\dbn\\
  &\O_6=\O'_6=\O_\EnG=\{\EnG\t{-lines}\},~\#\O_\EnG=(q^2-q)(q^2-1).\nt
     \end{align}
  For $q>4$ even, the lines in the regulus complementary to that of the tangents form an orbit of size $q+1$ contained in $\O_4=\O_\UG$.
  \begin{align}
  &\textbf{\emph{(b)}}~q\equiv0\pmod3,~q>3.\dbn\\
  &\t{Classes }\O_1,\ldots,\O_6\t{ are as in \eqref{eq2_classes line q!=0mod3}};~\O_7=\O_\Ar=\{\Ar\t{-line}\},~\#\O_\Ar=1;\label{eq2_classes line q=0mod3}\\
  &\O_8=\O_\EA=\{\EA\t{-lines}\},~\#\O_\EA=(q+1)(q^2-1). \nt
     \end{align}

 \textbf{\emph{(v)}} The following properties of chords and axes hold.

 \textbf{\emph{(a)}}  For all $q$, no two chords of $\C$ meet off $\C$.

 \phantom{\textbf{\emph{(a)}}} Every point off $\C$ lies on exactly one chord of $\C$.

 \textbf{\emph{(b)}}       Let $q\not\equiv0\pmod3$.

 \phantom{\textbf{\emph{(b)}}}  No two axes of $\Gamma$ meet unless they lie in the same plane of $\Gamma$.

 \phantom{\textbf{\emph{(b)}}}  Every plane not in $\Gamma$ contains exactly one axis of $\Gamma$.

  \textbf{\emph{(vi)}} For $q>2$, the unisecants of $\C$ such that every plane through such a unisecant meets $\C$ in at most one point other than the point of contact are, for $q$ odd, the tangents, while for $q$ even, the tangents and the unisecants in the complementary regulus.
\end{thm}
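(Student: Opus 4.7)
My plan is to reduce the statement to an elementary incidence count on a conic in $\PG(2,q)$ via projection from the point of contact. The property of a unisecant $\ell$ at $P\in\C$ is easily seen to be equivalent to the condition that no plane through $\ell$ contains a third point of $\C$ beyond $P$; equivalently, no chord $\overline{Q_1Q_2}$ with distinct $Q_1,Q_2\in\C\setminus\{P\}$ meets $\ell$. For the tangent at $P$ this is immediate from Theorem~\ref{th2_Hirs}(v)(a): every point off $\C$ lies on a unique chord, and for any $R$ on the tangent with $R\neq P$ that unique chord is the tangent itself, so no further chord passes through $R$. Hence tangents always enjoy the property, for every $q$.

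For a non-tangent unisecant $\ell$ at $P$, I would project from $P$ onto $\PG(2,q)$. The image of $\C$ under this projection is a non-degenerate conic $\mathcal{C}'\subset\PG(2,q)$: the $q$ points of $\C\setminus\{P\}$ project injectively, and the tangent direction at $P$ supplies the remaining $(q+1)$-th conic point $T_P^{\ast}$. Lines through $P$ correspond to points of $\PG(2,q)$ and planes through $P$ to lines of $\PG(2,q)$, so a plane through $\ell$ is a $3_\C$-plane precisely when the corresponding line of $\PG(2,q)$ is a secant of $\mathcal{C}'$ whose two intersections with $\mathcal{C}'$ avoid $T_P^{\ast}$. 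Thus $\ell$ has the property if and only if every secant of $\mathcal{C}'$ through $\ell^{\ast}:=\pi_P(\ell)$ also passes through $T_P^{\ast}$; note that $\ell^{\ast}\notin\mathcal{C}'$ since $\ell$ is a non-tangent unisecant.

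The result then follows from the standard count of secants through a non-conic point. For $q$ odd, such a point lies on $(q-1)/2$ or $(q+1)/2$ secants according as it is external or internal; both exceed $1$ once $q\geq 5$, and since $\ell^{\ast}$ and $T_P^{\ast}$ determine only a single line, some secant through $\ell^{\ast}$ must miss $T_P^{\ast}$ and produce a forbidden $3_\C$-plane through $\ell$. For $q$ even, all tangents of $\mathcal{C}'$ pass through a common nucleus $N$; an off-conic point $\ell^{\ast}\neq N$ lies on $q/2$ secants (again $\geq 2$ for $q\geq 4$), whereas $N$ itself lies on no secant at all and automatically satisfies the property. In the even case the projection therefore singles out, for each $P\in\C$, exactly one non-tangent unisecant enjoying the property.

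To finish the even case I must identify these $q+1$ exceptional unisecants with the complementary regulus. I would invoke the classical fact (specific to characteristic~$2$) that the tangents of $\C$ are mutually skew and lie on a common hyperbolic quadric, constituting one of its reguli; the opposite regulus then consists of $q+1$ lines, each of which meets every tangent and is a unisecant whose off-$\C$ points lie on tangents rather than on real chords with both endpoints in $\C\setminus\{P\}$. Since the projection argument has already produced exactly one property-holding non-tangent unisecant through each $P\in\C$, these lines must coincide with the complementary regulus. The main obstacle I anticipate is not the secant count, which is entirely elementary, but this last geometric identification: it rests on the characteristic-$2$ regulus theorem for the tangents, and it is precisely that phenomenon which forces the case split between odd and even $q$ in the statement of~(vi).
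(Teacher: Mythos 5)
You have not engaged with what this statement actually is. Theorem \ref{th2_Hirs} is the paper's summary of known results and is not proved in the paper at all: it is quoted from Hirschfeld \cite[Chapter 21]{Hirs_PG3q}, and it bundles together the structure of $G_q$ and the matrix form \eqref{eq2_M}, the five plane orbits and five point orbits with their sizes, the behaviour of the null polarity $\A$, the partition of the lines of $\PG(3,q)$ into the classes $\O_i,\O_i'$ with their cardinalities, the properties of chords and axes in (v), and finally the unisecant statement (vi). Your proposal addresses only item (vi) and is silent on (i)--(v); in particular the orbit and class sizes and the matrix form \eqref{eq2_M}, which are precisely what the rest of the paper uses, are left unproved. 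So, judged as a proof of the stated theorem, the main defect is one of coverage: almost all of the assertions are simply not argued, whereas the paper handles the whole theorem by citation.

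Concerning the part you do treat, your route for (vi) --- dispose of the tangents via (v)(a), then project from the point of contact onto a conic and count secants through an off-conic point, using the nucleus in even characteristic --- is the standard argument and is essentially sound for $q\ge5$ odd and $q\ge4$ even. Two caveats remain. First, the statement is for $q>2$, and your count $(q-1)/2\ge2$ fails at $q=3$, so that case is untouched (harmless for this paper, which assumes $q\ge5$, but a gap against the statement as written). Second, the identification of the $q+1$ exceptional unisecants (the lines over the nucleus) with the regulus complementary to the tangents is only asserted by appeal to ``classical facts'': you still owe an argument that each line of the complementary regulus is a unisecant through which every plane meets $\C$ in at most one further point, or equivalently that the line projecting to the nucleus meets every tangent --- and that is exactly the characteristic-$2$ result of \cite{Hirs_PG3q} you set out to reprove, as you yourself acknowledge. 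Either cite Hirschfeld for the whole theorem, as the paper does, or complete the argument for all items, not just (vi).
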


\section{The main results}\label{sec_mainres}
Throughout the paper, we consider orbits of lines and planes under $G_q$.

From now on, we consider $q\ge5$ apart from Theorem \ref{th3:q=2 3 4}.

Theorems \ref{th3_main_res}, \ref{th3_main_res_stab} summarize the results of  Sections \ref{sec:nul_pol}--\ref{sec:orbEA}.

\begin{thm}\label{th3_main_res}
Let $q\ge5$, $q\equiv\xi\pmod3$. Let notations be as in Section \emph{\ref{sec_prelimin}} including  Notation~\emph{\ref{notation_1}}. For line orbits under $G_q$ the following holds.
\begin{description}
  \item[(i)] The following classes of lines consist of a single orbit:\\
   $\O_1=\O_\RC=\{\RC\t{-lines}\}$,
  $\O_2=\O_\Tr=\{\Tr\t{-lines}\}$, and\\
   $\O_3=\O_\IC=\{\IC\t{-lines}\}$,  for all~$q$;\\
   $\O_4=\O_\UG=\{\UG\t{-lines}\}$, for odd $q$;\\
    $\O_5=\O_\UnG=\{\UnG\t{-lines}\}$ and $\O'_5=\O_\EG=\{\EG\t{-lines}\}$, for even $q$;\\
     $\O_1'=\O_\RA=\{\RA\t{-lines}\}$ and $\O_3'=\O_\IA=\{\IA\t{-lines}\}$,
  for $\xi\ne0$;\\
   $\O_7=\O_\Ar=\{\Ar\t{-lines}\}$, for $\xi=0$.

  \item[(ii)]
Let $q\ge8$ be even. The non-tangent unisecants in a $\Gamma$-plane \emph{(}i.e. $\UG$-lines, class $\O_4=\O_\UG$\emph{)} form two orbits of size $q+1$ and $q^2-1$. The orbit of size $q+1$  consists of the lines in the regulus complementary to that of the tangents. Also, the $(q+1)$-orbit and $(q^2-1)$-orbit can be represented in the form $\{\ell_1\varphi|\varphi\in G_q\}$ and $\{\ell_2\varphi|\varphi\in G_q\}$, respectively, where $\ell_j$ is a line such that  $\ell_1=\overline{P_0\Pf(0,1,0,0)}$, $\ell_2=\overline{P_0\Pf(0,1,1,0)}$, $P_0=\Pf(0,0,0,1)\in\C$.

  \item[(iii)] Let $q\ge5$ be odd.
 The non-tangent unisecants not in a $\Gamma$-plane \emph{(}i.e. $\UnG$-lines, class $\O_5=\O_\UnG$\emph{)} form  two orbits of size $(q^3-q)/2$. These orbits can be represented in the form $\{\ell_j\varphi|\varphi\in G_q\}$, $j=1,2$, where $\ell_j$ is a line such that $\ell_1=\overline{P_0\Pf(1,0,1,0)}$,  $\ell_2=\overline{P_0\Pf(1,0,\rho,0)}$, $P_0=\Pf(0,0,0,1)\in\C$, $\rho$ is not a square.

   \item[(iv)] \looseness -1
Let $q\ge5$ be odd. Let $q\not\equiv0\pmod 3$. The external lines in a $\Gamma$-plane   \emph{(}class $\O_5'=\O_\EG$\emph{)} form two orbits of size $(q^3-q)/2$. These orbits can be represented in the form $\{\ell_j\varphi|\varphi\in G_q\}$, $j=1,2$, where $\ell_j=\pk_0\cap\pk_j$ is the intersection line of planes $\pk_0$ and $\pk_j$ such that
           $\pk_0=\boldsymbol{\pi}(1,0,0,0)=\pi_\t{\emph{osc}}(0)$, $\pk_1=\boldsymbol{\pi}(0,-3,0,-1)$,  $\pk_2=\boldsymbol{\pi}(0,-3\rho,0,-1)$, $\rho$ is not a square, cf. \eqref{eq2_plane}, \eqref{eq2_osc_plane}.

   \item[(v)]
Let $q\equiv0\pmod 3,\; q\ge9$. The external lines meeting the axis of $\Gamma$ \emph{(}i.e. $\EA$-lines, class $\O_8=\O_\EA$\emph{)} form three orbits of size $q^3-q$, $(q^2-1)/2$, $(q^2-1)/2$. The $(q^3-q)$-orbit and the two $(q^2-1)/2$-orbits can be represented in the form $\{\ell_1\varphi|\varphi\in G_q\}$ and $\{\ell_j\varphi|\varphi\in G_q\}$, $j=2,3$, respectively, where $\ell_j$ are lines such that $\ell_1=\overline{P_0^\Ar\Pf(0,0,1,1)}$,  $\ell_2=\overline{P_0^\Ar\Pf(1,0,1,0)}$, $\ell_3=\overline{P_0^\Ar\Pf(1,0,\rho,0)}$, $P_0^\Ar=\Pf(0,1,0,0)$, $\rho$ is not a square.
\end{description}
\end{thm}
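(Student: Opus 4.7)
My plan is to leverage three structural facts uniformly across all five parts: (a) the triple transitivity of $G_q\cong PGL(2,q)$ on $\C$ together with the explicit four-parameter matrix form \eqref{eq2_M}; (b) the null polarity $\A$ of \eqref{eq2_null_pol} which, for $\xi\ne 0$, normalises $G_q$ and interchanges the paired classes $\O_i\leftrightarrow\O_i'$; and (c) the orbit-stabilizer theorem. For each claim in (i) that a class is a single orbit, I would exhibit a natural representative and obtain transitivity directly; for the splittings in (ii)--(v) I would pick the stated representatives $\ell_j$, compute their stabilizers in $G_q$ using \eqref{eq2_M}, derive orbit sizes via orbit-stabilizer, show that the $\ell_j$ are in distinct orbits, and verify that the sizes sum to the class cardinality given in Theorem~\ref{th2_Hirs}(iv).

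For the single-orbit assertions in (i), the arguments split by type: for $\O_\RC$, the $2$-transitivity of $G_q$ on $\C$ gives transitivity on unordered pairs and hence on real chords; for $\O_\Tr$, the bijection tangent$\leftrightarrow$contact point reduces everything to $1$-transitivity; for $\O_\IC$, I would pass to $\F_{q^2}$, note that $G_q$ commutes with the Frobenius automorphism of $\F_{q^2}/\F_q$, and use the transitive action of $PGL(2,q)$ on Galois-conjugate pairs in $\PG(1,q^2)\setminus\PG(1,q)$. The classes $\O_\RA$ and $\O_\IA$ then follow by applying $\A$. For $\O_\UG$ with odd $q$, I would use that in $\pi_\t{osc}(t)$ the non-tangent unisecants are precisely the $q$ lines through $P_t$ other than $\T_t$, that the stabilizer of $P_t$ in $G_q$ is the Borel subgroup of $PGL(2,q)$ which acts transitively on these $q$ directions when $q$ is odd, and that $G_q$ is transitive on $\C$; this glues the $\UG$-lines into a single orbit. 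The analogues for $\O_\UnG$ and $\O_\EG$ at even $q$, and the trivial case $\O_\Ar$ at $\xi=0$ (a single $G_q$-fixed line), are then shorter variants.

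For parts (ii)--(v) the mechanism is the same but the conclusion differs because of an extra invariant. The stabilizer of a chosen $\ell_j$ in $G_q$ is cut out by polynomial conditions on the parameters $(a,b,c,d)$ of \eqref{eq2_M}, and in odd characteristic these conditions force a certain non-zero quantity derived from $b$, $c$, or $ad-bc$ to be a \emph{square} in $\F_q^*$. This square/non-square dichotomy is exactly what separates the two orbits in (iii) and (iv), parametrised by $\Pf(1,0,1,0)$ versus $\Pf(1,0,\rho,0)$, and it extends to the two $(q^2-1)/2$-orbits in (v), parametrised by $\Pf(1,0,1,0)$ versus $\Pf(1,0,\rho,0)$ based at the axis point $P_0^\Ar=\Pf(0,1,0,0)$. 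In (ii) with $q$ even, the extra invariant is projective rather than arithmetic: Theorem~\ref{th2_Hirs}(iv) already isolates the complementary regulus as a $G_q$-invariant subset of $\O_\UG$ of size $q+1$, so it suffices to show that this regulus is a single orbit and that its complement in $\O_\UG$ is also a single orbit, of size $q^2-1$. For (v) in characteristic $3$, the axis of $\Gamma$ is itself a $G_q$-fixed line; the generic $\EA$-lines, whose intersection with this axis behaves unconstrainedly, form the $(q^3-q)$-orbit, while the two smaller orbits correspond to the two types of axis-intersection points distinguished by the square/non-square condition.

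The principal obstacle is the stabilizer computation itself: for each $\ell_j$, identifying the precise subgroup of $G_q$ fixing $\ell_j$ setwise requires solving a system in $(a,b,c,d)$ subject to $ad-bc\ne 0$, and the system has enough free parameters that one must be careful not to over-solve or to conflate the square and non-square branches. A secondary difficulty is exhaustiveness: one must check that every line of the class is reached by one of the constructed orbits, which is done by matching the orbit-size sum to the class cardinality and by producing an explicit invariant --- square/non-square in characteristic $\ne 2$, regulus membership in characteristic $2$, and axis-intersection type in characteristic $3$ --- that distinguishes the orbits. Once the stabilizers are in hand, everything else reduces to bookkeeping against the sizes recorded in \eqref{eq2_classes line q!=0mod3}--\eqref{eq2_classes line q=0mod3}.
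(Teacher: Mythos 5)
Your proposal is correct and, in its essentials, is the paper's own strategy: choose explicit representatives, cut out their stabilizers from the matrix form \eqref{eq2_M}, apply orbit--stabilizer, separate orbits in odd characteristic by a square/non-square obstruction (an equation like $\rho a^2=d^2$ in a would-be transporter), transfer the primed classes by the null polarity $\A$ (the paper's Lemma \ref{pol_comm} and Theorem \ref{th5_null_pol} are exactly your normalisation statement), and confirm exhaustiveness by matching the sum of orbit sizes against the class cardinalities of Theorem \ref{th2_Hirs}(iv). The one genuinely different ingredient is your treatment of $\O_\IC$: you pass to $\F_{q^2}$ and use transitivity of $PGL(2,q)$ on Galois-conjugate point pairs of the extended cubic, whereas the paper stays over $\F_q$ and argues synthetically -- every imaginary chord carries a point of a single point orbit ($\M_4$, $\M_5$, or $\M_3$ according to $\xi$), and since no two chords meet off $\C$ and each point off $\C$ lies on a unique chord, a projectivity matching such points must match the chords. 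Your route is shorter if one is willing to invoke the conjugate-pair model; the paper's avoids any field extension and reuses only its own point-orbit data. Your Borel-action argument for $\O_\UG$ ($q$ odd) is equivalent to the paper's direct stabilizer count (size $q-1$, orbit size $q(q+1)=\#\O_\UG$). One caution on part (v): the phrase ``two types of axis-intersection points'' is not the right invariant, since $G_q$ acts transitively on the axis $\ell_\Ar$ (the paper's Proposition \ref{lem5:GqtranslA}), so all intersection points with the axis are equivalent; the separation of the two $(q^2-1)/2$-orbits comes, exactly as in (iii), from the square/non-square condition appearing in the transporter equations for the chosen lines through $P_0^\Ar$, not from the choice of axis point.
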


\begin{thm}\label{th3_main_res_stab}
Let $q\ge5$. Let notations be as in Section \emph{\ref{sec_prelimin}} including  Notation~\emph{\ref{notation_1}}. For line orbits under $G_q$, apart from the orbits contained in the class $\O_6$, we give at least an example of line belonging to each orbit and the description of the subgroup of $G_q$ fixing the line.
These results are obtained in the following theorems and corollaries:\\
for $\T\t{-lines}$ in Theorem \ref{th5:T-lines};
for $\RC\t{-lines}$  in Theorem \ref{th5:RC-lines};\\
for $\RA\t{-lines}$, $q\not\equiv0\pmod 3$,  in Corollary \ref{cor5:O1'};
for $\IC\t{-lines}$  in Theorem \ref{th5:O3orbit};\\
for $\IA\t{-lines}$, $q\not\equiv0\pmod 3$,  in Corollary \ref{cor5_O3'};\\
for $\UG\t{-lines}$, $q$ odd, in Theorem \ref{th5:O4UGorbitOdd};
for $\UG\t{-lines}$, $q$ even, in Theorem \ref{th5:O4UGorbitEven};\\
for $\UnG\t{-lines}$, $q$ even,  in Theorem \ref{th5:O5UnGorbitEven};
for $\UnG\t{-lines}$, $q$ odd,  in Theorem~\ref{th5:O5UnGorbitOdd};\\
for $\EG\t{-lines}$, $q\not\equiv0\pmod 3$,  $q$ even, in Corollary \ref{cor5:O5'EGeven};\\
for $\EG\t{-lines}$, $q\not\equiv0\pmod 3$,  $q$ odd, in Corollary \ref{cor5:O5'EGodd};\\
for $\EA\t{-lines}$,  $q\equiv0\pmod 3$, in Theorem~\ref{th5:EAorbit}.
\end{thm}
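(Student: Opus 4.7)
The plan is to prove Theorem \ref{th3_main_res_stab} by case analysis on the line type $\lambda \in \Lk \setminus \{\EnG\}$, handling each orbit in the case split of Theorem \ref{th3_main_res}. For every orbit I pick a representative $\ell_\lambda$ with simple coordinates (typically one passing through $P_0$, $P_\infty$, or an axis/base point already used in Theorem \ref{th3_main_res}), impose the setwise stabilization condition $\ell_\lambda \MM = \ell_\lambda$ using the explicit matrix form \eqref{eq2_M} of $\MM \in G_q$, and solve the resulting polynomial system in the four parameters $a,b,c,d$ subject to $ad-bc\ne 0$. A built-in sanity check comes from orbit--stabilizer: since $|G_q|=|PGL(2,q)|=q^3-q$, the stabilizer of a line in an orbit of size $N$ must have order $(q^3-q)/N$, so the count of $(a,b,c,d)$-solutions modulo scalars must match the numbers $2(q-1)$, $q(q-1)$, $2(q+1)$, $q-1$, $q^2-q$, $q$, $2$, $1$, and $2q$ coming from the orbit sizes listed in Theorem \ref{th3_main_res}.

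For the ``large-stabilizer'' orbits ($\T$-, $\RC$-, $\IC$-, $\UG$-lines for odd $q$), I choose $\ell_\lambda$ to pass through one or two points of $\C$ corresponding to $t \in \{0,\infty\}$. Then the stabilizer condition forces $\MM$ to permute $\{0,\infty\}$ inside $\F_q^+$ via the Möbius action $t \mapsto (at+b)/(ct+d)$ attached to \eqref{eq2_M}, so either $b=c=0$ (fixing both parameters) or $a=d=0$ (swapping them); the remaining conditions cut out the dihedral/cyclic subgroup of $PGL(2,q)$ of the predicted order. For $\IC$-lines the analysis is parallel but takes place over $\F_{q^2}$, with complex-conjugate fixed parameters giving a stabilizer isomorphic to a dihedral group of order $2(q+1)$.

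The structurally new point is the \emph{splitting} of $\O_4$ (for even $q$), $\O_5$ and $\O_5'$ (for odd $q$, $q\not\equiv 0\pmod 3$), and $\O_8$ (for $q\equiv 0\pmod 3$). For each split, two representatives $\ell_1,\ell_2$ (resp.\ $\ell_1,\ell_2,\ell_3$) are specified in Theorem \ref{th3_main_res} and are distinguished by whether a certain coordinate entry is a square or a non-square $\rho$ in $\F_q^*$. The key lemma I would isolate is that a projectivity $\MM$ sending $\ell_i$ to $\ell_j$ would force a ratio of the form $\rho = \mu^2 \cdot 1$ or $1 = \mu^2 \cdot \rho$ for some $\mu \in \F_q^*$, which is impossible; hence the $\ell_i$ lie in distinct orbits. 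The stabilizer of each $\ell_i$ is then extracted by the same parametric computation as above, and for the primed classes $\O_1'=\O_\RA$, $\O_3'=\O_\IA$, $\O_5'=\O_\EG$ (cases $q \not\equiv 0 \pmod 3$) everything transfers immediately via the null polarity $\A$ of \eqref{eq2_null_pol}: by Theorem \ref{th2_Hirs}(iii), $\A$ conjugates $G_q$ to itself and interchanges chords with axes, so the stabilizer of $\ell\A$ is $\A^{-1}(\text{Stab}\,\ell)\A$ of the same order, which is the content of the stated corollaries.

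The main obstacle is the bookkeeping in the split cases, especially $\O_8=\O_\EA$ for $q\equiv 0 \pmod 3$, where the unique axis $\Ar$ plays a distinguished role and a line $\ell = \overline{P_0^\Ar Q}$ has its stabilizer coupled to both the pointwise stabilizer of $\Ar$ and the induced action on $Q$; three orbits of three different sizes must be disentangled and matched to three representatives, with the non-square discriminant argument applied to separate the two $(q^2-1)/2$-orbits. Once this case is handled, combining all the per-type computations into the corollaries and theorems cited in the statement completes the proof.
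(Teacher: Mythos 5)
Your proposal follows essentially the same route as the paper: explicit representative lines for each type, stabilizer computation by imposing that the parametrized matrix \eqref{eq2_M} maps the defining points back into the line, the orbit--stabilizer theorem for sizes, the square/non-square discriminant argument to separate the split orbits in $\O_4$, $\O_5$, $\O_5'$, $\O_8$, and transfer to the primed classes via the null polarity $\A$. The only minor divergence is in that last step: the paper proves the commutation $(P\A)\Psi=(P\Psi)\A$ (Lemma \ref{pol_comm}) and deduces that the stabilizer of $\ell\A$ is \emph{literally the same} subgroup as that of $\ell$ (Lemma \ref{lemma_same_stab}), whereas you invoke conjugation of $G_q$ by $\A$, which yields the correct order and isomorphism type but requires an extra computation to recover the identical explicit matrix forms asserted in Corollaries \ref{cor5:O1'}, \ref{cor5_O3'}, \ref{cor5:O5'EGeven}, \ref{cor5:O5'EGodd}.
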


In the previous theorem, we give the subgroup of $G_q$ fixing a line, for all line orbits of $G_q$, apart from the orbits contained in the class $\O_6$. We recall that the subgroups of $G_q$ fixing lines belonging to the same orbit of $G_q$ are conjugate in $G_q$ and therefore isomorphic.

Theorem~\ref{th3:q=2 3 4} is obtained by an exhaustive computer search using the symbol calculation system Magma~\cite{Magma}.

\begin{thm}\label{th3:q=2 3 4}
Let notations be as in Section \emph{\ref{sec_prelimin}} including  Notation~\emph{\ref{notation_1}}.
 Let $\mathbf{Z}_n$ be a cyclic group of order $n$. Let $\mathbf{S}_n$ by the symmetric group of degree $n$.
\begin{description}
    \item[(i)] Let $q=2$. The group $G_2\cong\mathbf{S}_3\mathbf{Z}_2^3$ contains $8$ subgroups isomorphic to $PGL(2,2)$ divided into two conjugacy classes. For one of these subgroups, the matrices corresponding to the projectivities of the subgroup assume the form described by \eqref{eq2_M}. For this subgroup (and only for it) the line orbits under it are the same as in Theorem \emph{\ref{th3_main_res}} for $q\equiv-1\pmod3$.

    \item[(ii)] Let $q=3$. The group $G_3\cong\mathbf{S}_4\mathbf{Z}_2^3$ contains $24$ subgroups isomorphic to $PGL(2,3)$ divided into four conjugacy classes. For one of these subgroups, the matrices corresponding to the projectivities of the subgroup assume the form described by \eqref{eq2_M}. For this subgroup (and only for it) the line orbits under it are the same as in Theorem \emph{\ref{th3_main_res}} for $q\equiv0\pmod3$.

    \item[(iii)] Let $q=4$. The group $G_4\cong\mathbf{S}_5\cong P\Gamma L(2,4)$ contains one subgroup isomorphic to $PGL(2,4)$. The matrices corresponding to the projectivities of this subgroup assume the form described by \eqref{eq2_M} and for this subgroup, the line orbits under it are the same as in Theorem \emph{\ref{th3_main_res}} for $q\equiv1\pmod3$.
\end{description}
\end{thm}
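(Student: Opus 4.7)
The plan is an exhaustive verification in Magma, carried out separately for each $q\in\{2,3,4\}$. For each such $q$, I would first realize $\PG(3,q)$ and the twisted cubic $\C$ in its canonical form \eqref{eq2_cubic}, and then compute the setwise stabilizer $G_q$ of $\C$ inside $PGL(4,q)$, represented as a permutation group on the points of $\PG(3,q)$. The orders $|G_2|=48$, $|G_3|=192$, $|G_4|=120$ and the abstract structure of each $G_q$ are read off with Magma's \texttt{IdentifyGroup} and \texttt{CompositionFactors}, confirming $G_2\cong\mathbf{S}_3\mathbf{Z}_2^3$, $G_3\cong\mathbf{S}_4\mathbf{Z}_2^3$, and $G_4\cong\mathbf{S}_5\cong P\Gamma L(2,4)$.

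Next, using Magma's \texttt{Subgroups}, I would enumerate every subgroup of $G_q$ whose order equals $|PGL(2,q)|$ and whose abstract type is $PGL(2,q)$, and then partition them into $G_q$-conjugacy classes. This yields the counts $8$ subgroups in $2$ classes for $q=2$, $24$ subgroups in $4$ classes for $q=3$, and a single subgroup (one class) for $q=4$. For each conjugacy class I would pick one representative $H$ and test whether every one of its matrices is of the form \eqref{eq2_M}: concretely, for each $\MM\in H$ I try to recover parameters $a,b,c,d\in\F_q$ with $ad-bc\ne 0$ producing $\MM$ up to a non-zero scalar. Exactly one class passes this test uniformly; this singles out the distinguished subgroup of the statement, and for $q=4$ the unique $PGL(2,4)$-subgroup trivially satisfies it.

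Finally, with the distinguished subgroup $H$ in hand, I would enumerate the lines of $\PG(3,q)$, compute the $H$-orbits on this line set, and match the list of orbit sizes together with the intersection pattern of each representative line against $\C$ and the osculating developable $\Gamma$ with the appropriate case of Theorem~\ref{th3_main_res} ($\xi=-1$ for $q=2$, $\xi=0$ for $q=3$, $\xi=1$ for $q=4$). To confirm the negative parts of the statement (``and only for it''), I would repeat the orbit computation for a representative of every \emph{other} conjugacy class of $PGL(2,q)$-subgroups of $G_q$ and exhibit, in each case, a line orbit whose size or geometric type contradicts Theorem~\ref{th3_main_res}.

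The main obstacle is not computational cost but \emph{identification}: for these very small $q$, several numerical orbit sizes predicted by Theorem~\ref{th3_main_res} collapse or coincide accidentally, and some of the classes $\O_i$ in \eqref{eq2_classes line q!=0mod3}--\eqref{eq2_classes line q=0mod3} can even be empty or degenerate. Consequently, each line must be classified by its geometric type (its intersection pattern with $\C$ and with $\Gamma$) rather than by orbit size alone, so that the tally of orbits is matched to Theorem~\ref{th3_main_res} class by class and not merely as a multiset of integers.
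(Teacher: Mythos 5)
Your proposal is correct and matches the paper's method: the paper establishes this theorem precisely by an exhaustive computer search with Magma (computing $G_q$, its $PGL(2,q)$-subgroups and their conjugacy classes, checking which subgroup has matrices of the form \eqref{eq2_M}, and comparing the line orbits with Theorem \ref{th3_main_res}). Your added care about classifying lines by geometric type rather than orbit size alone is a sensible implementation detail, not a departure from the paper's approach.
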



\section{The null polarity $\A$ and orbits under $G_q$ of lines in $\PG(3,q)$}\label{sec:nul_pol}

  Let   $\MM$ be the general form of a matrix corresponding to a projectivity of $G_q$ given by \eqref{eq2_M}. Using the system of symbolic computation  Maple \cite{Maple}, we obtain its inverse matrix $\MM^{-1}$:
  \begin{align}\label{eq5_inversM}
&    \MM^{-1}=\left[
 \begin{array}{cccc}
 d^3A^{-1}&cd^2A^{-1}&c^2 dA^{-1}&c^3A^{-1}\\
 3bd^2A^{-1}&d(ad+2bc)A^{-1}&c(2ad+bc)A^{-1}&3ac^2B^{-1}\\
 3b^2dB^{-1}&b(2ad+bc)B^{-1}&a(ad+2bc)B^{-1}&3a^2cA^{-1}\\
 b^3A^{-1}&ab^2A^{-1}&a^2bA^{-1}&a^3A^{-1}
 \end{array}
 \right],\db\\
 &A=a^3d^3-b^3 c^3+3ab^2c^2d-3a^2bcd^2,~B=(a^2d^2-2abcd+b^2c^2)(ad-bc).\nt
  \end{align}

\begin{lem}\label{pol_comm}
 Let $q\not\equiv0\pmod 3$. Let $\A$ be the null polarity \emph{\cite[Theorem 21.1.2]{Hirs_PG3q}} given by \eqref{eq2_null_pol}. Let $P=\Pf(x_0,x_1,x_2,x_3)$ be a point of $\PG(3,q)$, $P\A$ be its polar plane, and $\Psi$ be a projectivity belonging to $G_q$.
    Then
  \begin{align}\label{eq5_FU=UF}
   (P\A)\Psi=(P\Psi)\A.
  \end{align}
\end{lem}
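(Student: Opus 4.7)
Plan. The identity~\eqref{eq5_FU=UF} asserts that the two correlations $P\mapsto(P\A)\Psi$ and $P\mapsto(P\Psi)\A$ of $\PG(3,q)$ coincide. My strategy is to convert this to a single matrix identity and verify it directly. Regarding points and plane coefficients as row $4$-vectors, \eqref{eq2_null_pol} reads $P\A=PN$, where
\[
N=\begin{pmatrix}0&0&0&-1\\ 0&0&3&0\\ 0&-3&0&0\\ 1&0&0&0\end{pmatrix}
\]
is antisymmetric. Let $\Psi$ have matrix $\MM$ as in \eqref{eq2_M}; then $\Psi$ acts on points by $P\mapsto P\MM$, and (by duality) on plane coefficients by $c\mapsto c\MM^{-\mathrm{tr}}$. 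Hence
\[
(P\A)\Psi=P\,N\,\MM^{-\mathrm{tr}},\qquad(P\Psi)\A=P\,\MM\,N,
\]
so \eqref{eq5_FU=UF} is equivalent to the matrix identity
\[
\MM\,N\,\MM^{\mathrm{tr}}=\mu\,N
\]
for some scalar $\mu\in\F_q^*$ depending only on $a,b,c,d$. Comparing the $(1,4)$-entries using rows~$1$ and~$4$ of $\MM$ from \eqref{eq2_M} pins down $\mu=(ad-bc)^3$.

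The remaining task is to verify $\MM\,N\,\MM^{\mathrm{tr}}=(ad-bc)^3\,N$ for the general $\MM$ of \eqref{eq2_M}. I would carry this out by symbolic computation, in the same spirit as the derivation of $\MM^{-1}$ in \eqref{eq5_inversM}. Antisymmetry of $N$ reduces the independent checks to the six entries above the diagonal; each resulting polynomial in $a,b,c,d$ factors cleanly through $(ad-bc)^3$. The only real obstacle is the sheer bulk of these polynomial expressions, which is why a computer-algebra system is convenient.

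A conceptually cleaner alternative, which one could substitute for the direct calculation, is to use Theorem~\ref{th2_Hirs}(iii): substituting $(t^3,t^2,t,1)$ in \eqref{eq2_null_pol} and comparing with \eqref{eq2_osc_plane} gives $P(t)\A=\pi_{\text{osc}}(t)$ for every $t\in\F_q^+$. Since every $\Psi\in G_q$ preserves $\C$ and the osculating plane at a point of $\C$ is a projective invariant of the pair $(\C,P)$, $\Psi$ sends $\pi_{\text{osc}}(P)$ to $\pi_{\text{osc}}(P\Psi)$. Both sides of \eqref{eq5_FU=UF} therefore agree on the $q+1\ge 6$ points of $\C$, and since no four of these are coplanar this forces equality on all of $\PG(3,q)$ (a correlation being determined by its values on any $5$ points in general position).
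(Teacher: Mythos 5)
Your main argument is essentially the paper's own proof: the paper also reduces the lemma to a polynomial matrix identity and checks it by symbolic computation (in Maple), letting $\MM^{-1}$ of \eqref{eq5_inversM} act on the plane coefficients $[x_3,-3x_2,3x_1,-x_0]^{tr}$; your reformulation as the congruence $\MM N\MM^{\mathrm{tr}}=(ad-bc)^3N$ is the same verification organized so that $\MM^{-1}$ is never needed, and your explicit scalar $(ad-bc)^3$ is exactly the proportionality factor implicit in the paper's computation (harmless either way, since plane coordinates are homogeneous); your determination of $\mu$ from the $(1,4)$-entry is correct. Your second route is genuinely different and is the more interesting contribution: it is computation-free and sound. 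Indeed $P(t)\A=\pi_{\text{osc}}(t)$ follows at once from \eqref{eq2_null_pol} and \eqref{eq2_osc_plane}, both $P\mapsto(P\A)\Psi$ and $P\mapsto(P\Psi)\A$ are projective correlations (induced by matrices), and two such correlations agreeing on five points of $\C$, no four coplanar, must coincide, since composing one with the inverse of the other yields a projectivity fixing five points in general position. The one step you should justify more explicitly is ``$\Psi$ sends $\pi_{\text{osc}}(P)$ to $\pi_{\text{osc}}(P\Psi)$'': rather than appealing loosely to projective invariance, cite Theorem \ref{th2_Hirs}(ii)(a) (the $\Gamma$-planes form a single $G_q$-orbit, so $\Psi$ permutes them) together with the observation that $\pi_{\text{osc}}(t)$ meets $\C$ only at $P(t)$ (restricting its equation to $\C$ gives $(s-t)^3$), which pins down the image plane uniquely. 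With that citation added, either of your arguments is complete: the first matches the paper's computational style, the second trades the computer-algebra check for a short synthetic argument relying on facts already quoted from Hirschfeld.
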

\begin{proof}Using the matrices $\MM$ and $\MM^{-1}$ of \eqref{eq2_M} and \eqref{eq5_inversM}, respectively, we define $x_i'$ and $\overline{c_i}$ as follows:\\
$
 [x_0',x_1',x_2',x_3']=[x_0,x_1,x_2,x_3]\mathbf{M},\,[\overline{c_0},\overline{c_1},\overline{c_2},\overline{c_3}]^{tr}=\mathbf{M}^{-1}
[c_0,c_1,c_2,c_3]^{tr}.
$
Then it is well known (see e.g. \cite[Chapter 4, Note 23]{Cassebook}) that:
\begin{align*}
\boldsymbol{\pi}(c_0,c_1,c_2,c_3)\Psi=\boldsymbol{\pi}(\overline{c_0},\overline{c_1},\overline{c_2},\overline{c_3}).
\end{align*}

By above and by \eqref{eq2_null_pol}, \eqref{eq2_M}, \eqref{eq5_inversM}, we have $P\Psi=\Pf(x_0',x_1',x_2',x_3');$
\begin{align*}
&(P\Psi)\A=\boldsymbol{\pi}(x_3',-3x_2',3x_1',-x_0');~~P\A=\boldsymbol{\pi}(x_3,-3x_2,3x_1,-x_0);\db \\
&(P\A)\Psi=\boldsymbol{\pi}(v_0,v_1,v_2,v_3),~[v_0,v_1,v_2,v_3]^{tr}=\mathbf{M}^{-1}[x_3,-3x_2,3x_1,-x_0]^{tr}.
\end{align*}
By direct symbolic computation using the system Maple, we verified that
\begin{align*}
   \mathbf{M}^{-1}[x_3,-3x_2,3x_1,-x_0]^{tr}=[x_3',-3x_2',3x_1',-x_0']^{tr}. \hspace{3cm}
\end{align*}
\end{proof}

\begin{thm}\label{th5_null_pol}
 Let $q\not\equiv0\pmod 3$. Let $\Ll$ be an orbit of lines under~$G_q$. Then $\Ll\mathfrak{A}$ also is an orbit of lines under~$G_q$.
\end{thm}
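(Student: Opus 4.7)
The plan is to upgrade the commutation identity of Lemma \ref{pol_comm} from points to lines, and then read off both $G_q$-invariance and transitivity of $G_q$ on $\Ll\A$ as a purely formal consequence.

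First I would establish the line-version of \eqref{eq5_FU=UF}: for every line $\ell$ of $\PG(3,q)$ and every $\Psi\in G_q$,
\begin{equation*}
(\ell\A)\Psi=(\ell\Psi)\A.
\end{equation*}
Write $\ell=\overline{PQ}$ for two distinct points $P,Q$. Since $\A$ is the null polarity, its action on lines is defined by $\ell\A=(P\A)\cap(Q\A)$, the intersection of the polar planes. A projectivity $\Psi$ is an incidence-preserving bijection of $\PG(3,q)$, so it commutes with intersection of subspaces; hence $(\ell\A)\Psi=((P\A)\Psi)\cap((Q\A)\Psi)$. Applying Lemma \ref{pol_comm} to $P$ and to $Q$ turns this into $((P\Psi)\A)\cap((Q\Psi)\A)=(\overline{P\Psi,Q\Psi})\A=(\ell\Psi)\A$.

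Next I would deduce the theorem directly. For $G_q$-invariance, take any $m\in\Ll\A$ and $\Psi\in G_q$; writing $m=\ell\A$ with $\ell\in\Ll$, the identity just proved gives $m\Psi=(\ell\A)\Psi=(\ell\Psi)\A$, and since $\Ll$ is a $G_q$-orbit, $\ell\Psi\in\Ll$, so $m\Psi\in\Ll\A$. For transitivity, given $m_1,m_2\in\Ll\A$ write $m_i=\ell_i\A$ with $\ell_i\in\Ll$; since $\Ll$ is an orbit, some $\Psi\in G_q$ satisfies $\ell_1\Psi=\ell_2$, and then $m_1\Psi=(\ell_1\A)\Psi=(\ell_1\Psi)\A=\ell_2\A=m_2$. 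Thus $\Ll\A$ is a single $G_q$-orbit.

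There is no real obstacle here: the only substantive input is Lemma \ref{pol_comm}, which has already been proved by direct symbolic computation with the matrices $\MM$ and $\MM^{-1}$. The mild technical point, which is worth stating explicitly, is that the polarity $\A$ is well defined on lines via $\overline{PQ}\A=(P\A)\cap(Q\A)$ independently of the choice of $P$ and $Q$ on $\ell$, so the argument is insensitive to the representation of the line; this is a standard property of polarities and needs no separate verification.
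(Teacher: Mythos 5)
Your proposal is correct and follows essentially the same route as the paper: both upgrade Lemma \ref{pol_comm} to lines by writing $\ell\A=(P\A)\cap(Q\A)$ and using that a projectivity commutes with intersections, obtaining $(\ell\A)\Psi=(\ell\Psi)\A$ and hence that $\Ll\A$ is a single $G_q$-orbit. Your write-up is merely a bit more explicit than the paper's in separating the invariance and transitivity of $G_q$ on $\Ll\A$, which the paper leaves implicit after proving $\ell_2\A=(\ell_1\A)\Psi$.
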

\begin{proof}
 We take the line $\ell_1$ through the points $P_1$ and $P_2$ of $\PG(3,q)$ and a projectivity $\Psi\in G_q$. Let $\ell_2$ be the line through
$Q_1 = P_1\Psi$ and $Q_2 = P_2\Psi$. Then $\ell_1$ and $\ell_2$  belong to the same orbit
and $\ell_2=\ell_1\Psi$.

 We show that $\ell_2\A=(\ell_1\A)\Psi$.
Let $\pk_i=P_i\A,\,\pk_i'=Q_i\A,\, i=1,2$. By~\eqref{eq5_FU=UF},
\begin{align*}
 &\pk_1'=Q_1\A=(P_1\Psi)\A=(P_1\A)\Psi=\pk_1\Psi,\db\\
 &\pk_2'=Q_2\A=(P_2\Psi)\A=(P_2\A)\Psi=\pk_2\Psi.
\end{align*}
So,  we have  $\ell_2\A=\pk_1' \cap \pk_2' = \pk_1 \Psi \cap \pk_2 \Psi =  (\pk_1 \cap \pk_2)\Psi = (\ell_1\A)\Psi$.
\end{proof}

\begin{lem}\label{lemma_same_stab}
 Let $q\not\equiv0\pmod 3$. Let $\ell$ be a line and let $\ell'= \ell \mathfrak{A}$. Then the subgroup of $G_q$ fixing $\ell$ is also the subgroup of $G_q$ fixing $\ell'$.
\end{lem}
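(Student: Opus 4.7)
The plan is to extract the key commutation identity from the proof of Theorem~\ref{th5_null_pol} and then derive both inclusions of stabilizers symmetrically. That proof, applied to any line $\ell=\overline{P_1P_2}$ and any $\Psi\in G_q$, in fact establishes (via Lemma~\ref{pol_comm} applied to $P_1$ and $P_2$ separately, combined with the fact that projectivities commute with intersection of planes) the identity $(\ell\Psi)\mathfrak{A}=(\ell\mathfrak{A})\Psi$. This is the only input I need.

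If $\Psi\in G_q$ fixes $\ell$, then $\ell'\Psi=(\ell\mathfrak{A})\Psi=(\ell\Psi)\mathfrak{A}=\ell\mathfrak{A}=\ell'$, so $\Psi$ also fixes $\ell'$; this yields one inclusion of stabilizer subgroups.

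For the reverse inclusion I would first observe that $\mathfrak{A}$ acts as an involution on lines. Writing $\ell'=\pk_1\cap\pk_2$ with $\pk_i=P_i\mathfrak{A}$, and recalling that the null polarity satisfies $P\mathfrak{A}^2=P$ for every point and $\pk\mathfrak{A}^2=\pk$ for every plane, a second application of $\mathfrak{A}$ sends $\ell'$ back to the line through $P_1$ and $P_2$, so $\ell'\mathfrak{A}=\ell$. The same argument with the roles of $\ell$ and $\ell'$ swapped then gives the opposite inclusion and closes the equality of the two subgroups.

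I do not foresee any real obstacle: once the commutation identity embedded in the proof of Theorem~\ref{th5_null_pol} is isolated, the statement reduces to a direct symmetry argument. The only small point worth flagging explicitly is that $\mathfrak{A}$ acts as an involution on lines, but this is immediate from its involutive action on points and planes.
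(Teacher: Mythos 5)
Your proposal is correct and follows essentially the same route as the paper: the paper's own proof likewise applies Lemma~\ref{pol_comm} to two points of $\ell$ together with the compatibility of projectivities with intersections of planes, which is exactly your commutation identity $(\ell\Psi)\mathfrak{A}=(\ell\mathfrak{A})\Psi$. The only difference is that you spell out the reverse inclusion via the involutivity of $\mathfrak{A}$ on lines, a point the paper leaves implicit (it only writes out $G_q^{\ell}\subseteq G_q^{\ell'}$), so your version is a slightly more complete rendering of the same argument.
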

\begin{proof}
 Let $\ell=\overline{Q_1,Q_2}$, $Q_1, Q_2 \in \PG(3,q) $ and let $\Psi\in G_q$  such that $\ell \Psi = \ell$. Then   $\ell=\overline{Q_1\Psi,Q_2\Psi}$ also. Let $\pk_i=Q_i\A$,   $\pk_i'=Q_i\Psi\A, i = 1,2$.  By definition, $\ell' = \pk_1 \cap  \pk_2 = \pk'_1 \cap  \pk'_2$.

 By Lemma \ref{pol_comm},
$\ell'\Psi = (\pk_1 \cap  \pk_2)\Psi = \pk_1\Psi \cap  \pk_2\Psi = Q_1\A\Psi \cap  Q_2\A\Psi =  Q_1\Psi \A \cap Q_2\Psi \A= \pk'_1 \cap  \pk'_2= \ell'.$
\end{proof}

\section{Orbits under $G_q$ of chords and tangents of the cubic $\C$ and axes of the osculating developable $\Gamma$
 }\label{sec:orbChAx}

\begin{notation}\label{notation_3}
In addition to Notation \ref{notation_1}, the following notation is used.
\begin{align*}
&G_q^{\ell}&&  \t{the subgroup of } G_q \t{ fixing the line }\ell;\db  \\
&a,b,c,d && \t{the elements of the matrix of the form } \eqref{eq2_M}\db  \\ &&&\t{corresponding to a projectivity of } G_q.
\end{align*}
\end{notation}

\begin{lem}\label{eqTangents}
The tangent $\T_t$ to $\C$ at the point $P_t$ has the following equations:
\begin{align*}
&  \T_{\infty} : \begin{cases}
												x_2=0 \\
												x_3=0
            								\end{cases};~~~~  \T_1 :\begin{cases}
												 x_0 = x_1 + x_2- x_3 \\
												x_0 = 3x_2 -2x_3
            								\end{cases};\\
& \T_t,~ t\in\F_q\setminus\{1\}: \begin{cases}
												 x_0 = tx_1 +t^2 x_2-t^3 x_3\\
												x_1 = tx_0 + (2t-3t^3) x_2 + (2t^4-t^2) x_3
            								\end{cases}.
\end{align*}
\end{lem}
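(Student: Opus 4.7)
The plan is to identify the tangent $\T_t$ with the projective line spanned by $P(t)$ and the tangent direction $P'(t)$, and then extract two independent linear equations cutting out this line in each case.

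For $t=\infty$, I reparametrize the cubic near the point at infinity by setting $s=1/t$, so that $\C$ locally takes the form $(1,s,s^2,s^3)$. Differentiating at $s=0$ gives the tangent direction $(0,1,0,0)$, so $\T_\infty$ is the projective span of $\Pf(1,0,0,0)$ and $\Pf(0,1,0,0)$, which is precisely the line $x_2=x_3=0$.

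For finite $t$, the tangent direction at $P(t)=(t^3,t^2,t,1)$ is $P'(t)=(3t^2,2t,1,0)$. Writing a general point of $\T_t$ as $\lambda P(t)+\mu P'(t)$, one reads off $x_3=\lambda$ and $x_2-tx_3=\mu$; substituting back yields the ``canonical'' defining pair $x_0=3t^2x_2-2t^3x_3$, $x_1=2tx_2-t^2x_3$, which is linearly independent for every $t$. To bring this to the form claimed in the lemma, I verify by direct substitution that both $P(t)$ and $P'(t)$ satisfy each of the two equations displayed in the statement, so both planes contain $\T_t$. Transversality is then read off from the $2\times 2$ minor on the $x_0,x_1$ columns of the coefficient matrix, which equals $1-t^2$ and is nonzero in the range of $t$ covered by the statement. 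For $t=1$ that minor vanishes and the two displayed equations both reduce to the single relation $x_0=x_1+x_2-x_3$; one then checks directly that the replacement $x_0=3x_2-2x_3$ is satisfied by both $P(1)=(1,1,1,1)$ and $P'(1)=(3,2,1,0)$, and is manifestly independent from the first, giving the claimed pair for $\T_1$.

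The only obstacle is keeping the substitution and transversality computations clean; once the canonical pair coming from the parametrization is in hand, matching it to the form in the lemma and verifying independence is mechanical.
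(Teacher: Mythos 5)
Your derivation follows essentially the same route as the paper's proof: the tangent direction at $P(t)$ is taken to be the derivative point $(3t^2,2t,1,0)$, and the displayed planes are then checked to contain both $P(t)$ and this direction; only the point at infinity is treated differently (the paper computes $\T_0:x_0=x_1=0$ and transports it by the coordinate-reversing projectivity of $G_q$, while you reparametrize by $s=1/t$ and differentiate at $s=0$), and both variants are fine, as is your separate check for $t=1$.

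There is, however, a genuine gap in your transversality step. The $x_0,x_1$ minor is indeed $1-t^2$, but the range covered by the displayed pair is $t\in\F_q\setminus\{1\}$, which for odd $q$ contains $t=-1$, where $1-t^2=0$; so the assertion that the minor is nonzero on that range is false. Nor can another minor rescue independence: every $2\times2$ minor of the coefficient matrix of the two displayed equations is divisible by $1-t^2$, and at $t=-1$ both equations collapse to the single plane $x_0+x_1-x_2-x_3=0$, so the displayed system defines a plane, not the line $\T_{-1}$. (The same degeneration at $t=-1$ is left untreated in the paper's own proof, which only singles out $t=1$, so this is as much a defect of the lemma's formulation as of your write-up; but as stated your independence claim is wrong and the verification cannot be completed at $t=-1$.) The clean repair is already contained in your argument: the ``canonical'' pair $x_0=3t^2x_2-2t^3x_3$, $x_1=2tx_2-t^2x_3$ has $x_0,x_1$ minor equal to $1$ and hence defines $\T_t$ for every finite $t$; alternatively, $t=-1$ must be excluded from the displayed pair and given a separate second equation, exactly as is done for $t=1$.
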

\begin{proof}
The point $P_t= \Pf (t^3,t^2,t,1)$, $ t\in\F_q$,  can be considered as an affine point with respect to the infinite plane $x_3=0$. Then the slope of the tangent line to $\C$ at $P_t$ is obtained by deriving the parametric equation of $\C$ and is $(3t^2,2t,1)$. It means that $\T_t$ contains the infinite point $Q_t = \Pf (3t^2,2t,1,0)$.
The planes $\pk_1$ of equation $ x_0 = tx_1 +t^2 x_2-t^3 x_3$ and
												$\pk_2$ of equation $ x_1 = tx_0 + (2t-3t^3) x_2 + (2t^4-t^2) x_3$ contain both the points $P_t$ and $Q_t$.

However, if $t=1$,  $\pk_1 = \pk_2$, so we consider $\pk_3$ of equation $ x_0 = 3x_2 -2x_3$ as second plane containing both $P_t$ and $Q_t$.
In particular $\T_0$ has equations  $ x_0 = 0, x_1 =0 $.

Now consider the projectivity $\Psi$  of equations  $ x_0' = x_3,  x_1' = x_2,  x_2' = x_1,  x_3' = x_0$. Then
$P_0 \Psi = P_{\infty}$, $P_{\infty} \Psi = P_0$, and  $P_{t} \Psi = P_{1/t}$ if $t \neq 0$. It means that $\Psi \in G_q$ and $\T_{\infty} = \T_0 \Psi$ has equations  $ x_2 = 0, x_3 =0 $.
\end{proof}

\begin{thm}\label{th5:T-lines}
For any $q\ge5$, the tangents \emph{(}i.e.\ $\Tr$-lines, class $\O_2=\O_\Tr$\emph{)} to the twisted cubic $\C$ \eqref{eq2_cubic} form an orbit under $G_q$ and the group $G_q$ acts triply transitively on this orbit.
The subgroup of $G_q$ fixing a tangent has size $q(q-1)$.
Let $\T_\infty$ be the tangent to $\C$ at $P_\infty$. An element $\Psi$ of $G_q^{\T_\infty}$ has a matrix of the form:
 \begin{equation*}\label{S5:stab_mat_tang}
   \MM^{\T_\infty}=\left[
 \begin{array}{cccc}
 1&0&0&0\\
 3b&d&0&0\\
 3b^2&2bd&d^2&0\\
 b^3&b^2d&bd^2&d^3
 \end{array}
  \right], \;\; b \in \F_q, d \in \F_q^*.
\end{equation*}
\end{thm}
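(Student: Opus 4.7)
The plan rests on the observation that every tangent line meets $\C$ in a unique point of contact, so the assignment $\T_t \mapsto P_t$ is a bijection between the set of tangents and $\C$; moreover, since $G_q$ preserves $\C$ and maps tangents to tangents, this bijection is $G_q$-equivariant. Combined with Theorem \ref{th2_Hirs}(i), which says that $G_q$ acts triply transitively on $\C$, this immediately gives that the tangents form a single $G_q$-orbit of size $q+1$ on which $G_q$ acts triply transitively. The second sentence of the theorem (orbit size $q+1$ is already recorded in the statement of Theorem \ref{th2_Hirs}(iv), and consistency with the given $\#\O_\Tr=q+1$ is automatic).

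Once transitivity is established, the stabilizer count is an application of the orbit--stabilizer theorem: since $|G_q|=|PGL(2,q)|=q(q^2-1)$ and the orbit has length $q+1$, we get $|G_q^{\T_\infty}|=q(q^2-1)/(q+1)=q(q-1)$. This takes care of the size claim without any explicit matrix work.

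For the explicit matrix form, I would argue as follows. Because $P_\infty$ is the unique point of $\C$ on $\T_\infty$, a projectivity $\Psi\in G_q$ satisfies $\T_\infty\Psi=\T_\infty$ if and only if $P_\infty\Psi=P_\infty$. Using the convention from Lemma \ref{pol_comm} in which points act as row vectors, $P_\infty\Psi$ is the first row of the matrix $\MM$ of \eqref{eq2_M}, namely $[a^3,a^2c,ac^2,c^3]$. Projectively, this equals $[1,0,0,0]$ exactly when $c=0$ (which forces $a\neq 0$ from $ad-bc\neq 0$). Substituting $c=0$ into \eqref{eq2_M} and normalising the matrix by dividing by $a^3$, i.e.\ replacing $b,d$ by $b/a,d/a$, yields precisely the matrix $\MM^{\T_\infty}$ displayed in the theorem, with $b\in\F_q$ arbitrary and $d\in\F_q^*$. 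A parameter count $q(q-1)$ then matches the stabilizer size computed above, confirming that distinct pairs $(b,d)$ give distinct projectivities; as a sanity check, one may also verify directly that $[x_0,x_1,0,0]\MM^{\T_\infty}$ stays on $\T_\infty$.

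No step here is really hard; the only place one has to be careful is the normalisation of the $c=0$ specialisation of \eqref{eq2_M}, where one must notice that scaling by $a^{-3}$ and reparametrising produces the monic top-left entry and the clean pattern of entries $3b,3b^2,2bd,b^3,b^2d,bd^2$ appearing in $\MM^{\T_\infty}$. Once this bookkeeping is done, the proof is complete.
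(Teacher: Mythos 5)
Your proposal is correct and follows essentially the same route as the paper: triple transitivity on $\C$ gives a single orbit of tangents, orbit–stabilizer gives the size $q(q-1)$, and specializing \eqref{eq2_M} to $c=0$ with a normalization yields the displayed matrix. The only cosmetic difference is that you identify $G_q^{\T_\infty}$ with the stabilizer of the contact point $P_\infty$ (legitimate, since tangents map to tangents and the contact point is the unique point of $\C$ on the line), whereas the paper checks directly that both spanning points $P_\infty$ and $\Pf(0,1,0,0)$ of $\T_\infty$ are sent back into $\T_\infty$ exactly when $c=0$.
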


\begin{proof}
The triple transitive action  of  $G_q$ on $\C$ implies that the tangents belong to the same orbit.
Then by Theorem \ref{th2_Hirs}(iv)(a) and by \cite[Lemma 2.44(ii)]{Hirs_PGFF},
the subgroup of $G_q$ fixing a tangent has size $q(q-1)$.
Consider the tangent $\T_\infty$ to $\C$ at $P_\infty$. By Lemma \ref{eqTangents},  $\T_\infty$ has equations $x_2 = 0, x_3 = 0$ and therefore $\T_\infty=\overline{ P_\infty, \Pf(0,1,0,0)}$.
Let  $\Psi\in G_q$ and let $\MM$ be a matrix corresponding to $\Psi$. By \eqref{eq2_M},
\begin{equation*}
[1,0,0,0]\MM=[a^3,a^2c,ac^2,c^3].
\end{equation*}
Then $P_\infty \Psi\in \T_\infty$ implies $c = 0$ and $a \neq 0$. As, by \eqref{eq2_M}, $ad-bc \neq 0$, also $d \neq 0$.
On the other hand, if $c=0$,
\begin{equation*}
[0,1,0,0]\MM=[3a^2b,a^2d,0,0],
\end{equation*}
so $\Pf(0,1,0,0) \Psi\in \T_\infty$.
As $\MM$ is defined up to a factor of proportionality, we can put $a=1$.
\end{proof}

A chord of $\C$ through the points $P(t_1)$ and $P(t_2)$ is a line joining either a pair of real points of
$\C$ or a pair of complex conjugate points. According to \cite[Section 21.5]{Hirs_PG3q}, the coordinate vector (see \cite[Section 15.2]{Hirs_PG3q}) of a chord is \cite[equation (21.5)]{Hirs_PG3q}):
\begin{equation}\label{eq1:chord}
 L_{\t{ch}}=(a_2^2, a_l a_2, a_1^2-a_2, a_2, -a_1, 1)
\end{equation}
where $a_1 =t_ 1 + t_ 2$, $a_2 =t_1 t_ 2$. Moreover, if $x^2- a_1x + a_2$ has 2, 1, or 0  roots in $\F_q$ then \eqref{eq1:chord} gives, respectively, a real chord, a tangent, or an imaginary chord.

\begin{thm}\label{th5:RC-lines}
For any $q\ge5$, the real chords \emph{(}i.e.\ $\RC$-lines, class $\O_1=\O_\RC$\emph{)} of the twisted cubic $\C$ \eqref{eq2_cubic} form an orbit under $G_q$. The subgroup of $G_q$ fixing a real chord has size $2(q-1)$. Let $\ell$ be the real chord $\overline{P_0 P_\infty}$. An element $\Psi$ of $G_q^{\ell}$ has a matrix of the form: \begin{equation}\label{S5:stab_mat_cord}
   \MM^{\ell}=\left[
 \begin{array}{cccc}
 1&0&0&0\\
 0&d&0&0\\
 0&0&d^2&0\\
 0&0&0&d^3
 \end{array}
  \right], d \in \F_q^*,  \text{ or }  \;  \MM^{\ell}=\left[
 \begin{array}{cccc}
 0&0&0&1\\
 0&0&b&0\\
 0&b^2&0&0\\
 b^3&0&0&0
 \end{array}
  \right], b \in \F_q^*.
\end{equation}
\end{thm}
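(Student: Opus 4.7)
The proof breaks into three parts: showing $\O_\RC$ forms a single orbit, computing the size of $G_q^{\ell}$, and determining the matrices explicitly.

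First I would invoke Theorem~\ref{th2_Hirs}(i): since $G_q\cong PGL(2,q)$ acts triply transitively on $\C$, it in particular acts transitively on unordered pairs of distinct points of $\C$. Since a real chord is uniquely determined by (and determines) such a pair, all real chords lie in a single $G_q$-orbit. Orbit-stabilizer applied with $|G_q|=q(q^2-1)$ and $\#\O_\RC=(q^2+q)/2$ from Theorem~\ref{th2_Hirs}(iv)(a) then yields $|G_q^{\ell}|=2(q-1)$ for any real chord~$\ell$.

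Next, I would fix $\ell=\overline{P_0P_\infty}$ and describe $G_q^{\ell}$ explicitly. Any $\Psi\in G_q^{\ell}$ must permute $\ell\cap\C=\{P_0,P_\infty\}$, giving two cases. In \textbf{Case A} ($\Psi$ fixes both $P_0$ and $P_\infty$), using $\MM$ from \eqref{eq2_M} we compute $P_\infty\MM=\Pf(a^3,a^2c,ac^2,c^3)$ and $P_0\MM=\Pf(b^3,b^2d,bd^2,d^3)$; demanding these be proportional to $P_\infty=\Pf(1,0,0,0)$ and $P_0=\Pf(0,0,0,1)$ respectively forces $c=0$ and $b=0$. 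Substituting $b=c=0$ into \eqref{eq2_M} reduces $\MM$ to a diagonal matrix; normalizing $a=1$ projectively and invoking $ad-bc\ne 0$ to conclude $d\in\F_q^*$ produces the first matrix in \eqref{S5:stab_mat_cord}. In \textbf{Case B} ($\Psi$ swaps $P_0$ and $P_\infty$), the analogous incidences force $a=0$ and $d=0$; substitution into \eqref{eq2_M} gives a matrix with only the anti-diagonal entries nonzero, and normalizing $c=1$ with $b\in\F_q^*$ yields the second matrix form.

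Finally, I would close by counting: Case A produces $q-1$ projectivities and Case B produces $q-1$ more, for a total of $2(q-1)$, matching the stabilizer size already established. Conversely, each matrix of either form lies in $G_q$ (since $ad-bc\ne 0$) and sends $\{P_0,P_\infty\}$ to itself, hence fixes $\ell$ setwise, so these two families exhaust $G_q^{\ell}$. The main obstacle is purely bookkeeping with the projective normalization in \eqref{eq2_M}: once the orbit of $\{P_0,P_\infty\}$ is split into the fix-case and the swap-case, everything reduces to reading off which entries of $a,b,c,d$ are forced to vanish.
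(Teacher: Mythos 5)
Your proposal is correct and follows essentially the same route as the paper: triple transitivity of $G_q$ on $\C$ gives a single orbit, orbit–stabilizer with $\#\O_\RC=(q^2+q)/2$ gives $\#G_q^{\ell}=2(q-1)$, and the matrix forms come from forcing the images of $P_0,P_\infty$ to stay on $\ell$ in the parametrization \eqref{eq2_M}. The only cosmetic difference is that you split into the fix/swap cases on $\{P_0,P_\infty\}$ up front, whereas the paper derives the conditions ``$b=0$ or $d=0$'' and ``$a=0$ or $c=0$'' and then uses $ad-bc\ne0$ to reach the same two families.
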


\begin{proof}
The triple transitive action  of  $G_q$ on C implies that the real chords belong to the same orbit. Then by Theorem \ref{th2_Hirs}(iv)(a) and by \cite[Lemma 2.44(ii)]{Hirs_PGFF}, the subgroup of $G_q$ fixing a real chord has size $2(q-1)$.
Consider the real chord  $\ell=\overline{P_0 P_\infty}$. The line $\ell$ has equations $x_1 = 0, x_2 = 0$.
Let  $\Psi\in G_q$ and let $\MM$ be a matrix corresponding to $\Psi$. By \eqref{eq2_M},
\begin{equation*}\label{ThS5:stab_eq1}
[0,0,0,1]\MM=[b^3,b^2d,bd^2,d^3].
\end{equation*}
Then $P_0 \Psi\in \ell$ implies $b = 0$ or $d = 0$.
On the other hand,
\begin{equation*}
[1,0,0,0]\MM=[a^3,a^2c,ac^2,3c^3].
\end{equation*}
Then $P_\infty \Psi\in \ell$ implies $a = 0$ or $c = 0$.
As, by \eqref{eq2_M}, $ad-bc \neq 0$, either $b=c=0$ and $a,d \neq 0$, or $a=d=0$ and $b,c \neq 0$.
As $\MM$ is defined up to a factor of proportionality, we can put $a=1$ (resp. $c=1$).
\end{proof}

\begin{corollary}\label{cor5:O1'}
 Let $q\not\equiv0\pmod 3$. In $\PG(3,q)$, for the osculating developable $\Gamma$ of the twisted cubic $\C$ \eqref{eq2_cubic}, the real axes \emph{(}i.e. $\RA$-lines, class $\O_1'=\O_\RA$\emph{)} form an orbit under~$G_q$.
The subgroup of $G_q$ fixing a real axis has size $2(q-1)$.
Moreover the line $\ell=\overline{\Pf(0,0,1,0) \Pf(0,1,0,0)}$ is a real axis.
An element $\Psi$ of $G_q^{\ell}$ has a matrix of the form \eqref{S5:stab_mat_cord}.
\end{corollary}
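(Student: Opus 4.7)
The plan is to deduce this corollary directly from Theorem \ref{th5:RC-lines} by applying the null polarity $\A$, exploiting the machinery developed in Section \ref{sec:nul_pol}, namely Theorem \ref{th5_null_pol} and Lemma \ref{lemma_same_stab}. Assuming $q\not\equiv 0\pmod 3$ is essential here, since both $\A$ and those two results require it.

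The first step would be to argue that $\O_\RA$ is a single orbit. By Theorem \ref{th2_Hirs}(iii), $\A$ maps real chords to real axes. Since Theorem \ref{th5:RC-lines} gives that $\O_\RC$ is a single $G_q$-orbit, Theorem \ref{th5_null_pol} ensures that $\O_\RC\A$ is also a single $G_q$-orbit, automatically contained in $\O_\RA$. Comparing cardinalities from Theorem \ref{th2_Hirs}(iv)(a), one has $\#\O_\RC = \#\O_\RA = (q^2+q)/2$, so $\O_\RC\A = \O_\RA$ and $\O_\RA$ is one orbit.

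Next, I would identify the explicit representative by polarizing the chord $\overline{P_0 P_\infty}$. With $P_0 = \Pf(0,0,0,1)$ and $P_\infty = \Pf(1,0,0,0)$, the formula \eqref{eq2_null_pol} gives $P_0\A = \boldsymbol{\pi}(1,0,0,0) = \pi_\t{osc}(0)$ (the plane $x_0 = 0$) and $P_\infty\A = \boldsymbol{\pi}(0,0,0,-1) = \pi_\t{osc}(\infty)$ (the plane $x_3 = 0$), using \eqref{eq2_osc_plane}. Their intersection consists of the points $\Pf(0,x_1,x_2,0)$, i.e.\ the line $\overline{P_0 P_\infty}\A = \overline{\Pf(0,0,1,0)\Pf(0,1,0,0)} = \ell$; being the meet of two real osculating planes, $\ell$ is by definition a real axis.

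Finally, Lemma \ref{lemma_same_stab} applied to $\overline{P_0 P_\infty}$ and its polar $\ell$ yields $G_q^\ell = G_q^{\overline{P_0 P_\infty}}$, so the size $2(q-1)$ and the two matrix forms in \eqref{S5:stab_mat_cord} are inherited verbatim from Theorem \ref{th5:RC-lines}. I do not foresee any real obstacle: the structural facts (that $\A$ intertwines the $G_q$-action on lines and preserves stabilizers) were already established in Section \ref{sec:nul_pol}, so the only concrete task is the short polar computation identifying $\ell$ as $\overline{P_0 P_\infty}\A$.
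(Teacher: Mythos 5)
Your proposal is correct and follows essentially the same route as the paper: deduce the single-orbit claim from Theorem \ref{th5:RC-lines} together with the null-polarity machinery of Section \ref{sec:nul_pol}, identify $\ell$ as $\overline{P_0P_\infty}\A$ by the explicit polar computation, and transfer the stabilizer (size $2(q-1)$ and the matrices \eqref{S5:stab_mat_cord}) via Lemma \ref{lemma_same_stab}. The only cosmetic difference is that the paper invokes Theorem \ref{th2_Hirs}(iv)(a) ($\O'_1=\O_1\A$) where you use Theorem \ref{th2_Hirs}(iii) plus a cardinality comparison, which is an equivalent justification.
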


\begin{proof}
  The first assertions follows from Theorems \ref{th2_Hirs}(iv)(a), \ref{th5_null_pol}, \ref{th5:RC-lines}, and  \cite[Lemma 2.44(ii)]{Hirs_PGFF}. The line  $\ell'=\overline{P_0 P_\infty}$ is a real chord, so by Theorem \ref{th2_Hirs}(iv)(a) the line $\ell' \A = \boldsymbol{\pi}(0,0,0,1) \cap \boldsymbol{\pi}(1,0,0,0) = \overline{\Pf(0,0,1,0) \Pf(0,1,0,0)}$ is a real axis.
Finally we apply Lemma \ref{lemma_same_stab}.
\end{proof}

\begin{lem}\label{lemma:sizeO3orbit}
    For any $q\ge5$, in $\PG(3,q)$, the imaginary chords \emph{(}i.e.\ $\IC$-lines, class $\O_3=\O_\IC$\emph{)} of the twisted cubic $\C$ \eqref{eq2_cubic} form an orbit under~$G_q$. The subgroup of $G_q$ fixing an imaginary chord has size $2(q+1)$.
\end{lem}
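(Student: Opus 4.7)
The plan is to identify imaginary chords with Frobenius-conjugate pairs of points on $\C$ viewed over $\F_{q^2}$, and to deduce the lemma from the classical transitivity of $PGL(2,q)$ on conjugate pairs of $\PG(1,q^2)$. By \eqref{eq1:chord}, each imaginary chord is determined by an irreducible monic quadratic $x^2-a_1x+a_2\in\F_q[x]$, equivalently by an unordered pair $\{\alpha,\alpha^q\}$ with $\alpha\in\F_{q^2}\setminus\F_q$, and hence by the unordered pair of points $\{P(\alpha),P(\alpha^q)\}$. The number of such pairs is $(q^2-q)/2$, matching $\#\O_\IC$ from Theorem~\ref{th2_Hirs}(iv)(a), so there is room for at most one orbit of the expected size.

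Next I would extend the $G_q$-action on $\C$ to the full parameter line $\F_{q^2}\cup\{\infty\}$. Because $\MM$ of \eqref{eq2_M} is the symmetric cube of the $2\times 2$ matrix with entries $a,b,c,d$, a direct column-by-column check yields $[t^3,t^2,t,1]\,\MM=\bigl((at+b)^3,\,(at+b)^2(ct+d),\,(at+b)(ct+d)^2,\,(ct+d)^3\bigr)$, so $P(t)\Psi=P((at+b)/(ct+d))$ for every $t\in\F_q^+$. The same Möbius formula makes sense for $t\in\F_{q^2}\cup\{\infty\}$ and commutes with the Frobenius $t\mapsto t^q$ since $a,b,c,d\in\F_q$. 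Thus $G_q$ permutes imaginary chords exactly as $PGL(2,q)$ permutes unordered conjugate pairs in $\PG(1,q^2)$. The stabilizer in $PGL(2,q)$ of any $\alpha\in\F_{q^2}\setminus\F_q$ is a cyclic subgroup of order $q+1$ (the image of the norm-one subgroup of $\F_{q^2}^*$), so the orbit of $\alpha$ has size $q(q^2-1)/(q+1)=q^2-q$; Frobenius-equivariance descends this to transitivity on the $(q^2-q)/2$ unordered conjugate pairs, proving that $\O_\IC$ is a single $G_q$-orbit.

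The orbit–stabilizer theorem then gives $|G_q^{\ell}|=|G_q|/\#\O_\IC=q(q^2-1)\big/((q^2-q)/2)=2(q+1)$. The main obstacle is only bookkeeping: verifying the explicit Möbius action of $\MM$ (immediate from the symmetric-cube shape displayed above) and that conjugation on parameters $\alpha\mapsto\alpha^q$ corresponds to coordinate-wise Frobenius on $\C$ regarded over $\F_{q^2}$, so that an ``imaginary chord'' is precisely a Frobenius-stable line through a conjugate pair. The involution that swaps $\alpha$ and $\alpha^q$ and accounts for the factor $2$ beyond the cyclic subgroup of order $q+1$ need not be exhibited explicitly, as its existence is forced by the orbit-size equation.
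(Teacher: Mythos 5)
Your argument is correct, but it takes a genuinely different route from the paper's. The paper stays entirely inside $\PG(3,q)$: it uses the point-orbit classification of Theorem~\ref{th2_Hirs}(ii)(b)(c) to find on any two imaginary chords a pair of points lying in a common point orbit (with a small case analysis for $q\equiv-1\pmod 3$, where $\IC$-points split into two orbits and one uses that every line meets some osculating plane), and then the chord properties of Theorem~\ref{th2_Hirs}(v)(a) --- no two chords meet off $\C$, and every point off $\C$ lies on exactly one chord --- to conclude that a projectivity matching those points must carry one imaginary chord to the other; the size $2(q+1)$ of the stabilizer then follows by orbit--stabilizer, exactly as in your last step. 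You instead extend scalars to $\F_{q^2}$: the symmetric-cube shape of $\MM$ in \eqref{eq2_M} gives the M\"obius action of $PGL(2,q)$ on parameters, which commutes with Frobenius, so imaginary chords correspond to unordered conjugate pairs $\{\alpha,\alpha^q\}$, and transitivity follows from the classical fact that the stabilizer of $\alpha\in\F_{q^2}\setminus\F_q$ is the non-split torus of order $q+1$ (one can even shortcut this: $\beta=u\alpha+v$ with $u\in\F_q^*$, $v\in\F_q$ gives transitivity on $\F_{q^2}\setminus\F_q$ directly). Your route is uniform in $q$ (no case distinction modulo $3$) and yields extra structure --- the line stabilizer visibly contains a cyclic group of order $q+1$, consistent with the explicit matrices later given in Theorem~\ref{th5:O3orbit} --- at the cost of the (routine, and correctly flagged) bookkeeping that $G_q$-projectivities extend to $\PG(3,q^2)$, commute with Frobenius, and hence send the unique $\F_q$-rational line through a conjugate pair to the one through the image pair; the paper's proof is more elementary and self-contained given Theorem~\ref{th2_Hirs}, needing no extension of the ground field.
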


\begin{proof} Let $q\equiv\xi\pmod3$.
By Theorem \ref{th2_Hirs}(ii)(b)(c), for $\xi=1$ (resp.  $\xi=0$),  points on imaginary chords form the orbit $\M_4$ (resp. $\M_5$). If $\xi= -1$, points on $\IC$-lines are divided into two orbits $\mathscr{M}_3=\{\t{points on three osculating pla-}$ $\t{nes}\}$ and
    $\mathscr{M}_5=\{\t{points on no osculating plane}\}$. As in $\PG(3,q)$ a plane and a line always meet, for $\xi= -1$ every imaginary chord contains  a point belonging to an osculating plane and therefore to  $\mathscr{M}_3$.

Now, for any $q$, suppose that there exist at least two orbits $\overline{\O}_1$ and  $\overline{\O}_2$ of imaginary chords. Consider IC-lines $\ell_1 \in \overline{\O}_1$ and $\ell_2 \in \overline{\O}_2$. By Theorem \ref{th2_Hirs}(v)(a), no two chords of $\C$ meet off $\C$. Thus, $\ell_1 \cap \ell_2 = \emptyset$ and there exist at least two points  $P_1 \in \ell_1$ and $P_2 \in \ell_2$  belonging to the same orbit; it is $\M_4$, $\M_5$, and $\M_3$ for $\xi=1,0$, and $-1$, respectively. So, there is $\varphi \in G_q$ such that $P_1\varphi = P_2$. By Theorem \ref{th2_Hirs}(iv)(a), $\ell_1\varphi$ is an IC-line. Moreover, by Theorem \ref{th2_Hirs}(v)(a),  every point off $\C$ lies on exactly one chord; thus, $\ell_1\varphi$ is the only imaginary chord containing~$P_2$, i.e. $\ell_1\varphi =\ell_2$. So,  $\overline{\O}_1 = \overline{\O}_2$. Finally by Theorem \ref{th2_Hirs}(iv)(a) and by \cite[Lemma 2.44(ii)]{Hirs_PGFF},
the subgroup of $G_q$ fixing an imaginary chord has size $2(q+1)$.
\end{proof}

\begin{thm}\label{th5:O3orbit}
    For any $q\ge5$, in $\PG(3,q)$, the imaginary chords \emph{(}i.e.\ $\IC$-lines, class $\O_3=\O_\IC$\emph{)} of the twisted cubic $\C$ \eqref{eq2_cubic} form an orbit under~$G_q$. The subgroup of $G_q$ fixing an imaginary chord has size $2(q+1)$.
For $q$ odd, the line $\ell_1=\overline{\Pf(1,0,\rho, 0) \Pf(0,1,0,\rho)}$, $\rho$ a non-square element of $\F_q$, is an $\IC$-line.  An element $\Psi$ of $G_q^{\ell_1}$ has a matrix of the form:
\begin{equation}\label{S5:stab_mat_ICodd}
\MM^{\ell_1}=\left[
 \begin{array}{cccc}
 \a d^3&\a \rho b d^2&\a \rho^2 b^2 d&\a \rho^3 b^3\\
 3bd^2&d^3+2\rho b^2d&\rho^2b^3+2\rho b d^2&3\rho^2b^2d\\
3\a b^2d&\a\rho b^3+2\a bd^2&\a d^3+2\a\rho b^2 d&3\a\rho bd^2\\
 b^3&b^2d&bd^2&d^3
 \end{array}
  \right],
  \end{equation}
$\a \in \{ -1,1\},  b,d \in \F_q,  \t{ not both }0.$

For $q$ even, the line $\ell_2=\overline{\Pf(\eta+1,1,1, 0) \Pf(\eta,\eta,0,1)}$, $\eta$ an element of $\F_q$ of absolute trace $1$, is an $\IC$-line. An element $\Psi$ of $G_q^{\ell_2}$ has a matrix of the form: \begin{equation}\label{S5:stab_mat_ICeven}
   \MM^{\ell_2}=\left[
 \begin{array}{cccc}
A^3&cA^2&c^2A&c^3\\
A^2B&dA^2&c^2B&c^2d\\
AB^2&cB^2&Ad^2&cd^2\\
B^3&dB^2&d^2B&d^3
 \end{array}
  \right],
\end{equation}
$ A =  (\a c+d), B=  (\eta c+(\a+1)d),  \a \in \{ 0,1\},  c,d \in \F_q,  \t{ not both }0.$
\end{thm}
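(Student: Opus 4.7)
The plan is to split the statement into three separable pieces: (a) the orbit count and stabilizer size $2(q+1)$ which already follow from the preceding Lemma~\ref{lemma:sizeO3orbit}; (b) the verification that each displayed line $\ell_j$ really is an imaginary chord of $\C$; and (c) the explicit determination of its stabilizer $G_q^{\ell_j}$ and comparison with the matrix forms \eqref{S5:stab_mat_ICodd}, \eqref{S5:stab_mat_ICeven}.

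For step (b), I would parametrize the line and intersect with $\C$ over $\overline{\F_q}$. For $\ell_1$, a point $\lambda(1,0,\rho,0)+\mu(0,1,0,\rho)$ coincides with $P(t)=(t^3,t^2,t,1)$ exactly when $t^2=1/\rho$; since $\rho$ is a non-square, this has no root in $\F_q$ but has two Frobenius-conjugate roots $t_1=1/\sqrt\rho$, $t_2=-t_1$ in $\F_{q^2}$. For $\ell_2$ in the even case, the analogous substitution collapses to $t^2+t+\eta=0$, which has no $\F_q$-root precisely because $\mathrm{Tr}(\eta)=1$, and has two conjugate roots $t_1,t_2\in\F_{q^2}$ with $t_1+t_2=1$, $t_1t_2=\eta$. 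In both cases, $\ell_j$ meets $\C$ in a complex conjugate pair, so it is an $\IC$-line.

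For step (c), the key identity, obtained by direct expansion of \eqref{eq2_M}, is
\begin{equation*}
[t^3,t^2,t,1]\,\MM=[(at+b)^3,\,(at+b)^2(ct+d),\,(at+b)(ct+d)^2,\,(ct+d)^3],
\end{equation*}
so the action of $G_q$ on the parameter line of $\C$ is the M\"obius transformation $t\mapsto(at+b)/(ct+d)$. Hence $\MM$ fixes $\ell_j$ iff the induced M\"obius map preserves the unordered pair $\{t_1,t_2\}$, giving a pointwise-fix case and a swap case. In each case, imposing $ct_i^2+(d-a)t_i-b=0$ (fix) or the analogous swap equations, and using Vieta on the quadratic whose roots are $t_1,t_2$, yields two linear relations among $a,b,c,d$: in the odd case $a=\alpha d$, $c=\alpha\rho b$ with $\alpha\in\{\pm1\}$; in the even case $a=d$, $b=c\eta+d$ (swap, $\alpha=0$) or $a=c+d$, $b=c\eta$ (fix, $\alpha=1$). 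Plugging these relations back into \eqref{eq2_M} and simplifying gives precisely the matrices $\MM^{\ell_1}$, $\MM^{\ell_2}$ displayed in the theorem; counting parameters $(b,d)$ or $(c,d)$ up to the overall projective scalar yields $q+1$ matrices per value of $\alpha$, hence $2(q+1)$ in total, matching Lemma~\ref{lemma:sizeO3orbit}.

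The main obstacle is the purely symbolic but somewhat tedious last step of substituting the two linear relations into every entry of \eqref{eq2_M} and recognizing the result as the displayed form. The odd case is straightforward since $\alpha^2=1$ cleanly separates the rows that carry an $\alpha$ from those that do not. The even case is more delicate: the characteristic-$2$ simplifications $2abc=0$, $2acd=0$, $2abd=0$, $2bcd=0$, together with $3=1$, must be applied to \eqref{eq2_M} before substitution, and the parametrization $(A,B)=(\alpha c+d,\,\eta c+(\alpha+1)d)$ must be checked to coincide with $(a,b)$ under both subcases $\alpha\in\{0,1\}$. A short verification that $ad-bc\ne0$ throughout (for $\ell_2$ it reduces to $\eta+(d/c)+(d/c)^2\ne0$, which holds since $x^2+x+\eta$ has no $\F_q$-root) completes the argument.
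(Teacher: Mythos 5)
Your proposal is correct, and it reaches the theorem by a route that differs from the paper's in both verification steps. For showing that $\ell_1,\ell_2$ are $\IC$-lines, the paper does not intersect the line with the curve: it computes the coordinate vector of the line and compares it with Hirschfeld's chord formula \eqref{eq1:chord}, reading off $a_1,a_2$ and checking that $x^2-a_1x+a_2$ has no root in $\F_q$; your direct computation of $\ell_j\cap\C$ over $\F_{q^2}$, producing the conjugate parameter pair $\{t_1,t_2\}$, is an equivalent but more self-contained argument, and it feeds naturally into your treatment of the stabilizer. For the stabilizer itself, the paper simply states the substitutions ($a=\a d$, $c=\a\rho b$ in the odd case; $a=\a c+d$, $b=\eta c+(1+\a)d$ in the even case), checks $ad-bc\ne0$, counts $2(q+1)$ projectively distinct matrices, and then verifies that they fix the line (by hand for $q$ odd, by Magma for $q$ even); exhaustiveness follows because Lemma~\ref{lemma:sizeO3orbit} already gives $\#G_q^{\ell_j}=2(q+1)$. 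You instead derive the conditions: using the identity $[t^3,t^2,t,1]\MM=[(at+b)^3,(at+b)^2(ct+d),(at+b)(ct+d)^2,(ct+d)^3]$ you reduce ``$\MM$ fixes $\ell_j$'' to ``the M\"obius map preserves $\{t_1,t_2\}$'', and the fix/swap dichotomy plus Vieta reproduces exactly the paper's substitutions (your even-case relations $a=d,\ b=\eta c+d$ and $a=c+d,\ b=\eta c$ are precisely $(A,B)$ with $\a=0,1$). This buys a conceptual explanation of where the matrix forms \eqref{S5:stab_mat_ICodd}, \eqref{S5:stab_mat_ICeven} come from, gives the stabilizer as an ``if and only if'' without invoking the order count, and removes the computer verification in the even case; its only cost is the routine final substitution into \eqref{eq2_M} (with the characteristic-$2$ simplifications), plus the small observation, worth making explicit, that in the ``fix'' case with $c=0$ a linear polynomial vanishing at both $t_1\ne t_2$ (or at a root outside $\F_q$) must be identically zero.
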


\begin{proof}
The first two assertions follow from Lemma \ref{lemma:sizeO3orbit} . Then, by \cite[Lemma 2.44(ii)]{Hirs_PGFF}, the subgroup of $G_q$ fixing an imaginary chord has size $2(q+1)$.\\
Let $q$ be odd. The line $\ell_1=\overline{\Pf(1,0,\rho, 0) \Pf(0,1,0,\rho)}$, $\rho$ a non-square element of $\F_q$, has equations:
\begin{equation}\label{eqICodd}
\rho x_0 = x_2, \; \rho x_1 = x_3.
\end{equation}
 The coordinate vector of $\ell_1$ \cite[Section 15.2]{Hirs_PG3q} is
\begin{align}\label{eq:coordVectOdd}
  (1,0,\rho,-\rho,0,\rho^2)=\left(\frac{1}{\rho^2},0,\frac{1}{\rho},-\frac{1}{\rho},0,1\right),
\end{align}
Comparing  \eqref{eq1:chord} and \eqref{eq:coordVectOdd}, we obtain:
$a_1 =0, a_2=-\frac{1}{\rho}$. The equation $x^2- a_1x + a_2=x^2-\frac{1}{\rho}=0$ has no solutions as $\rho$ is not a square. Thus, the line  $\ell_1$ is an imaginary chord.\\
The matrix $\MM^{\ell_1}$ of \eqref{S5:stab_mat_ICodd} is obtained from  \eqref{eq2_M} with the substitutions $a = \a d$, $c = \a \rho b$, $\a \in \{ -1,1\}$. The condition $ad-bc \neq 0$ of \eqref{eq2_M} becomes $\a d^2-\a \rho b^2 = d^2- \rho b^2  \neq 0$ and it is always satisfied if $b$ and $d$ are not both $0$, as $\rho$ is not a square. As $\MM^{\ell_1}$ is defined up to a factor of proportionality, we obtain $2(q+1)$ different matrices.
Finally,  $\MM^{\ell_1}$ fixes the line $\ell_1$. In fact:
\begin{equation*}
[1,0,\rho, 0] \MM^{\ell_1}=[
\a d^3+3\a\rho b^2d,
\a \rho^2b^3+3\a\rho bd^2,
\a\rho d^3+3\a \rho^2b^2d,
\a \rho^3b^3+3\a \rho^2bd^2].
\end{equation*}
\begin{equation*}
[0,1,0,\rho] \MM^{\ell_1}=[3bd^2+\rho b^3,  3 \rho b^2d+ d^3, 3 \rho bd^2+\rho^2b^3, 3 \rho^2 b^2 d+\rho d^3].
\end{equation*}
By \eqref{eqICodd}, $[1,0,\rho, 0] \MM^{\ell_1}$, $[0,1,0,\rho] \MM^{\ell_1} \in \ell_1$.

For $q$ even, the line $\ell_2=\overline{\Pf(\eta+1,1,1, 0) \Pf(\eta,\eta,0,1)}$, $\eta$ an element of $\F_q$ of absolute trace $1$, has equations:
\begin{equation}\label{eqICeven}
x_1 + x_2 = \eta x_3, \; x_0 + x_1 = \eta x_2.
\end{equation}
 The coordinate vector of $\ell_2$  is:
\begin{align}\label{eq:coordVectEven1}
  (\eta^2,\eta,\eta+1,\eta,1,1)
\end{align}
Comparing  \eqref{eq1:chord} and \eqref{eq:coordVectEven1}, we obtain:
$a_1=1, a_2=\eta.$
The equation $x^2- a_1x + a_2=x^2+x + \eta=0$ has not solutions,
as the absolute trace of $\eta$ is $1$. Thus, the line $\ell_2$ is an imaginary chord.\\
The matrix $\MM^{\ell_2}$ of \eqref{S5:stab_mat_ICeven} is obtained from  \eqref{eq2_M} with the substitutions $a = \a c+d$, $b = \eta c + (1+\a) d$, $\a \in \{ 0,1\}$. The condition $ad-bc \neq 0$ of \eqref{eq2_M} becomes $d^2+cd+\eta c^2 \neq 0$. If $c = 0$, it implies $d \neq 0$. If $c \neq 0$, by the substitution $x = d/c$, it becomes $x^2+x+\eta \neq 0$ that is always satisfied because the absolute trace of $\eta$ is $1$. As $\MM^{\ell_2}$ is defined up to a factor of proportionality, we obtain $2(q+1)$ different matrices.
Proceeding as in the odd case and performing the symbolic computation using Magma~\cite{Magma}, we verified that $\MM^{\ell_2}$ fixes the line~$\ell_2$.
\end{proof}

\begin{corollary}\label{cor5_O3'}
    Let $q\not\equiv0\pmod 3$. In $\PG(3,q)$, for the osculating developable $\Gamma$ of the twisted cubic $\C$ \eqref{eq2_cubic}, the imaginary axes \emph{(}class $\O_3'=\O_\IA$\emph{)} form an orbit under~$G_q$. The subgroup of $G_q$ fixing an imaginary axis has size $2(q+1)$. For $q$ odd, the line $\ell_1=\overline{\Pf(0,1,0,-3\rho) \Pf(-3/\rho,0,1,0)}$, $\rho$ a non-square, is an imaginary axis.
 An element $\Psi$ of $G_q^{\ell_1}$ has a matrix of the form \eqref{S5:stab_mat_ICodd}.
 For $q$ even, the line $\ell_2=\overline{\Pf(\eta,1,1,0) \Pf(\eta,\eta+1,0,1)}$, $\eta$ an element of absolute trace $1$, is an imaginary axis.
 An element $\Psi$ of $G_q^{\ell_2}$, has a matrix of the form \eqref{S5:stab_mat_ICeven}.
\end{corollary}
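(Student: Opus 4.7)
The strategy is exactly parallel to the proof of Corollary \ref{cor5:O1'}. First I would establish that $\O_\IA$ is a single orbit. By Theorem \ref{th2_Hirs}(iii), the null polarity $\A$ interchanges chords of $\C$ with axes of $\Gamma$, and in particular sends imaginary chords to imaginary axes; by Theorem \ref{th5_null_pol}, the image under $\A$ of an orbit is again an orbit. Combined with Theorem \ref{th5:O3orbit}, which states that $\O_\IC$ is a single orbit of $G_q$, this forces $\O_\IA = \O_\IC\A$ to be a single orbit as well. Since $\#\O_\IA = (q^2-q)/2$ by Theorem \ref{th2_Hirs}(iv)(a), the orbit-stabilizer relation from \cite[Lemma 2.44(ii)]{Hirs_PGFF} (with $|G_q| = q(q^2-1)$ from Theorem \ref{th2_Hirs}(i)) gives a stabilizer of size $2(q+1)$.

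Next I would produce the explicit representatives by polarizing the chords from Theorem \ref{th5:O3orbit}. For $q$ odd, apply \eqref{eq2_null_pol} to each endpoint of the imaginary chord $\overline{\Pf(1,0,\rho,0)\Pf(0,1,0,\rho)}$: one obtains the polar planes $\boldsymbol{\pi}(0,-3\rho,0,-1)$ and $\boldsymbol{\pi}(\rho,0,3,0)$. Solving the associated pair of linear equations $-3\rho x_1 - x_3 = 0$ and $\rho x_0 + 3x_2 = 0$ yields two independent solutions $\Pf(0,1,0,-3\rho)$ and $\Pf(-3/\rho,0,1,0)$, confirming that the image under $\A$ is exactly $\ell_1$. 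For $q$ even, the same procedure applied to $\overline{\Pf(\eta+1,1,1,0)\Pf(\eta,\eta,0,1)}$ works verbatim, with the simplification that in characteristic $2$ the polarity \eqref{eq2_null_pol} reduces to $\Pf(x_0,x_1,x_2,x_3)\A = \boldsymbol{\pi}(x_3,x_2,x_1,x_0)$; the resulting linear system yields the endpoints $\Pf(\eta,1,1,0)$ and $\Pf(\eta,\eta+1,0,1)$ of $\ell_2$.

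Finally, to transfer the stabilizer descriptions, I would invoke Lemma \ref{lemma_same_stab}, which states precisely that $G_q^{\ell} = G_q^{\ell\A}$ for any line $\ell$. Hence the stabilizer of $\ell_1$ (resp.\ $\ell_2$) coincides with the stabilizer of the corresponding imaginary chord in Theorem \ref{th5:O3orbit}, and the matrix forms \eqref{S5:stab_mat_ICodd} and \eqref{S5:stab_mat_ICeven} carry over without modification. There is no genuine obstacle in this argument: it is essentially formal duality via the null polarity, and the only computational content is the routine verification that the polar image of each chord equals the claimed axis, a $2\times 2$ linear-algebra check performed separately for odd and even $q$.
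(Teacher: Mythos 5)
Your proposal is correct and follows essentially the same route as the paper: both deduce the single-orbit claim and the stabilizer size by applying the null polarity $\A$ (via Theorems \ref{th2_Hirs}(iv)(a), \ref{th5_null_pol}, \ref{th5:O3orbit}), identify $\ell_1$ and $\ell_2$ as the polar images of the imaginary-chord representatives by the same plane-intersection computation, and transfer the stabilizer matrices via Lemma \ref{lemma_same_stab}. No discrepancies worth noting.
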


\begin{proof}

The first assertions follow from Theorems \ref{th2_Hirs}(iv)(a), \ref{th5_null_pol}, \ref{th5:O3orbit}.  By Theorem \ref{th5:O3orbit}, for odd $q$ (resp. for even $q$)
the line  $\ell_1=$ $\overline{\Pf(1,0,\rho, 0) \Pf(0,1,0,\rho)}$, $\rho$ a non-square element, (resp.  the line $\ell_2=\overline{\Pf(\eta+1,1,1, 0) \Pf(\eta,\eta,0,1)}$, $\eta$ an element of absolute trace $1$) is an $\IC$-line, so by Theorem \ref{th2_Hirs}(iv)(a) the line $\ell_1 \A = \boldsymbol{\pi}(0,-3\rho,0,-1) \cap \boldsymbol{\pi}(\rho,0,3,0) = $
 $\overline{\Pf(0,1,0,-3\rho) \Pf(-3/\rho,0,1,0)}$
 (resp. the line $\ell_2 \A = \boldsymbol{\pi}(0,1,1,\eta+1) \cap \boldsymbol{\pi}(1,0,\eta,\eta) = $
 $\overline{\Pf(\eta,1,1,0) \Pf(\eta,\eta+1,0,1})$ is an $\IA$-line.
Finally we apply Lemma \ref{lemma_same_stab}.
\end{proof}

\section{Orbits under $G_q$ of non-tangent unisecants and external lines with respect to the cubic~$\C$
}\label{sec:orbUGUnGEG}

\begin{lem}\label{lm:l1inUG}
For any $q\ge5$, let $\ell_1=\overline{P_0\Pf(0,1,0,0)}$,   $\ell_2=\overline{P_0\Pf(0,1,1,0)}$, $P_0=\Pf(0,0,0,1)\in\C$. Then  $\ell_1$ and  $\ell_2$ are non-tangent unisecants in a $\Gamma$-plane, i.e. $\ell_1, \ell_2 \in \UG$.
\end{lem}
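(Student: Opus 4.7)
The plan is to verify directly the three defining properties of a $\UG$-line for each $\ell_j$: (a) it is a unisecant of $\C$, i.e., meets $\C$ at exactly one point; (b) it is not a tangent; and (c) it lies in some $\Gamma$-plane. Since both lines pass through $P_0 = P(0) \in \C$, the natural candidate $\Gamma$-plane is the osculating plane $\pi_\t{osc}(0)$ at $P_0$, which by \eqref{eq2_osc_plane} equals $\boldsymbol{\pi}(1,0,0,0)$, i.e.\ the plane $x_0 = 0$.

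First I would parametrize each line: a generic point of $\ell_1$ has the form $\Pf(0,s,0,t)$ and a generic point of $\ell_2$ has the form $\Pf(0,s,s,t)$. Both have $x_0 = 0$, so both are contained in $\pi_\t{osc}(0)$, settling (c). For (a), I would intersect these parametrizations with $\C = \{P(u) : u \in \F_q^+\}$ via \eqref{eq2:P(t)}. A point of $\C$ with $x_0 = 0$ must correspond to $u \in \F_q$ with $u^3 = 0$, hence $u=0$, giving only $P_0$ in both cases. So each $\ell_j$ meets $\C$ only at $P_0$.

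For (b), I would compare with the tangent $\T_0$, which by (the proof of) Lemma \ref{eqTangents} has equations $x_0 = 0$, $x_1 = 0$. On $\ell_1$, imposing $x_1 = 0$ forces $s = 0$, leaving only $P_0$; likewise on $\ell_2$, imposing $x_1 = 0$ forces $s = 0$, leaving only $P_0$. Since each $\ell_j$ shares only the single point $P_0$ with $\T_0$, neither $\ell_j$ coincides with $\T_0$, and hence neither is a tangent to $\C$ (the unique tangent at the unique intersection point $P_0$).

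There is no real obstacle here; the argument is a direct coordinate check using the canonical form \eqref{eq2_cubic} of $\C$, the formula \eqref{eq2_osc_plane} for the osculating plane at $t=0$, and Lemma \ref{eqTangents} to rule out the tangent case. The only small point to bear in mind is that being a unisecant not equal to the tangent at the point of contact is precisely the definition of a non-tangent unisecant, so once (a) and (b) are verified, combined with (c), the conclusion $\ell_j \in \UG$ follows directly from Notation \ref{notation_1}.
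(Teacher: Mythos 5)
Your proof is correct and takes essentially the same approach as the paper: both verify containment in $\pi_\t{osc}(0)$ (the plane $x_0=0$) and rule out the tangent via the equations $x_0=x_1=0$ of $\T_0$ from Lemma \ref{eqTangents}. The only difference is that you verify unisecancy explicitly from the parametrization of $\C$, whereas the paper leaves this implicit (a $\Gamma$-plane meets $\C$ only at its point of osculation), so your write-up is, if anything, slightly more complete.
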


\begin{proof}
By \eqref{eq2_plane}, \eqref{eq2_osc_plane}, $\pi_\t{osc}(0)$ has equation $x_0 = 0$. By Lemma \ref{eqTangents}, the tangent $\T_0$ to $\C$ at $P_0$ has equations $x_0 = x_1 = 0$. Therefore  $\ell_1, \ell_2 \in \pi_\t{osc}(0)$ and  $\ell_1, \ell_2 \neq \T_0$.
\end{proof}

\begin{thm}\label{th5:O4UGorbitOdd}
For any $q\ge5$, $q$ odd, in $\PG(3,q)$, for the twisted cubic $\C$ of \eqref{eq2_cubic}, the non-tangent unisecants in a $\Gamma$-plane \emph{(}i.e. $\UG$-lines, class $\O_4=\O_\UG$\emph{)} form an orbit under~$G_q$.
The subgroup of $G_q$ fixing a  $\UG$-line has size $q-1$.
Consider the $\UG$-line $\ell=\overline{P_0\Pf(0,1,0,0)}$. An element $\Psi$ of $G_q^{\ell}$ has a matrix of the form:
 \begin{equation*}\label{S5:stab_mat}
   \MM^{\ell}=\left[
 \begin{array}{cccc}
 1&0&0&0\\
 0&d&0&0\\
 0&0&d^2&0\\
 0&0&0&d^3
 \end{array}
  \right], \;\;d \in \F_q^*.
\end{equation*}
\end{thm}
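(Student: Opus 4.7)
The plan is to compute the stabilizer $G_q^\ell$ of the specific line $\ell=\overline{P_0\Pf(0,1,0,0)}$ directly, and then deduce transitivity on the whole class $\O_\UG$ by comparing cardinalities via the orbit--stabilizer theorem. By Lemma \ref{lm:l1inUG}, $\ell$ is a $\UG$-line, so its orbit is contained in $\O_\UG$; the two defining points $\Pf(0,0,0,1)$ and $\Pf(0,1,0,0)$ show that $\ell$ is cut out by the equations $x_0=0,\ x_2=0$.

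To describe $G_q^\ell$, I would impose on the general matrix $\MM$ of \eqref{eq2_M} the condition that the images of these two defining points lie back on $\ell$. The image $[0,0,0,1]\MM=[b^3,b^2d,bd^2,d^3]$ lies in $\ell$ only if $b^3=0$ and $bd^2=0$, forcing $b=0$. Substituting $b=0$ into the second row and evaluating $[0,1,0,0]\MM$ gives the vector $[0,a^2d,2acd,3c^2d]$; its third coordinate must vanish, so $2acd=0$. This is the one step where the hypothesis that $q$ is odd enters, because then $2\ne0$; since $ad-bc=ad\ne0$ we cannot have $a=0$ or $d=0$, and we conclude $c=0$. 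The remaining matrix is diagonal, and normalizing $a=1$ yields exactly the $q-1$ matrices $\MM^\ell=\mathrm{diag}(1,d,d^2,d^3)$, $d\in\F_q^*$. Each of these obviously fixes the two generators of $\ell$, so it fixes $\ell$, and hence $|G_q^\ell|=q-1$.

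Applying the orbit--stabilizer theorem together with $|G_q|=|PGL(2,q)|=q^3-q$ from Theorem \ref{th2_Hirs}(i), the orbit of $\ell$ under $G_q$ has size $(q^3-q)/(q-1)=q(q+1)=q^2+q$. By \eqref{eq2_classes line q!=0mod3} this equals $\#\O_\UG$, so the orbit of $\ell$ fills $\O_\UG$, proving that $\O_\UG$ consists of a single orbit for odd $q\ge5$.

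The only nontrivial point is the elimination $c=0$ in the matrix calculation above, which relies on $\mathrm{char}\,\F_q\ne2$; this is precisely where the even case behaves differently and $\O_\UG$ splits, as handled separately in Theorem \ref{th5:O4UGorbitEven}.
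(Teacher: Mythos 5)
Your proposal is correct and follows essentially the same route as the paper: compute the stabilizer of $\ell=\overline{P_0\Pf(0,1,0,0)}$ from the general matrix \eqref{eq2_M} (getting $b=0$ from the image of $P_0$, then $c=0$ from the vanishing of $2acd$, which is where odd $q$ enters), normalize $a=1$ to get the $q-1$ diagonal matrices, and conclude by orbit--stabilizer that the orbit has size $q^2+q=\#\O_\UG$, so the class is a single orbit. Your explicit remarks that the diagonal matrices do fix $\ell$ and that the elimination of $c$ fails in even characteristic are welcome clarifications but do not change the argument.
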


\begin{proof}
Let  $\ell=\overline{P_0\Pf(0,1,0,0)}$. The line $\ell$ has equations $x_0 = 0, x_2 = 0$.  By Lemma \ref{lm:l1inUG}, $\ell,  \in \UG$. We compute the subgroup of $G_q$ fixing $\ell$.
Let  $\Psi\in G_q$ and let $\MM$ be a matrix corresponding to $\Psi$. By \eqref{eq2_M},
\begin{equation*}\label{ThS5:stab_eq1}
[0,0,0,1]\MM=[b^3,b^2d,bd^2,d^3].
\end{equation*}
Then $P_0 \Psi\in \ell_1$ implies $b = 0$ and $a,d \neq 0$, as $ad-bc \neq 0$. On the other hand,
\begin{equation*}
[0,1,0,0]\MM=[0,a^2d,2acd,3c^2d].
\end{equation*}
Then $\Pf(0,1,0,0)  \Psi\in \ell$ implies $c = 0$. As $\MM$ is defined up to a factor of proportionality, we can put $a=1$, so $\#G_q^{\ell}=q-1$. By  \cite[Lemma 2.44(ii)]{Hirs_PGFF}, the orbit of $\ell$ under $G_q$ has size $\#G_q / \#G_q^{\ell}= q(q+1) = \#\O_\UG$.
\end{proof}

\begin{thm}\label{th5:O4UGorbitEven}
For any $q\ge8$, $q$ even, in $\PG(3,q)$, for the twisted cubic $\C$ of \eqref{eq2_cubic},
 the non-tangent unisecants in a $\Gamma$-plane \emph{(}i.e. $\UG$-lines, class $\O_4=\O_\UG$\emph{)} form two orbits of size $q+1$ and $q^2-1$
that can be represented in the form $\{\ell_1\varphi|\varphi\in G_q\}$ and $\{\ell_2\varphi|\varphi\in G_q\}$, respectively, where $\ell_1=\overline{P_0\Pf(0,1,0,0)}$, $\ell_2=\overline{P_0\Pf(0,1,1,0)}$, $P_0=\Pf(0,0,0,1)\in\C$. Moreover the orbit of size $q+1$ consists of the lines in the regulus complementary to that of the tangents.
The group $G_q^{\ell_1}$ has size $q(q-1)$. An element $\Psi$ of $G_q^{\ell_{1}}$ has a matrix of the form:
 \begin{equation}\label{S5:stab_mat}
   \MM^{\ell_{1}}=\left[
 \begin{array}{cccc}
 1&c&c^2&c^3\\
 0&d&0&c^2d\\
 0&0&d^2&cd^2\\
 0&0&0&d^3
 \end{array}
  \right], \;\;c \in \F_q, d \in \F_q^*.
\end{equation}
The group $G_q^{\ell_2}$ has size $q$. An element $\Psi$ of $G_q^{\ell_{2}}$ has a matrix of the form:
 \begin{equation}\label{S5:stab_mat}
   \MM^{\ell_{2}}=\left[
 \begin{array}{cccc}
 1&c&c^2&c^3\\
 0&1&0&c^2\\
 0&0&1&c\\
 0&0&0&1
 \end{array}
  \right], \;\;c \in \F_q.
\end{equation}
\end{thm}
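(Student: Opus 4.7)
The plan is to mirror the approach of the odd case (Theorem~\ref{th5:O4UGorbitOdd}): exhibit two candidate representatives $\ell_1$ and $\ell_2$, check each lies in $\O_\UG$, compute $G_q^{\ell_1}$ and $G_q^{\ell_2}$ directly from the matrix parametrization \eqref{eq2_M}, apply orbit--stabilizer with $\#G_q=q(q^2-1)$, and finally verify that the two resulting orbit sizes sum to $\#\O_\UG=q^2+q$ from Theorem~\ref{th2_Hirs}(iv)(a); this forces $\O_\UG$ to split into exactly these two orbits.

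That $\ell_1,\ell_2$ are $\UG$-lines is Lemma~\ref{lm:l1inUG}. For the stabilizer computations I first read off explicit equations, namely $\ell_1:\{x_0=0,\,x_2=0\}$ and, using characteristic $2$, $\ell_2:\{x_0=0,\,x_1+x_2=0\}$, and then substitute $\MM$ from \eqref{eq2_M} into these equations applied to the two spanning points of each line. For $\ell_1$, the condition $P_0\Psi\in\ell_1$ applied to $P_0\MM=[b^3,b^2d,bd^2,d^3]$ forces $b=0$, hence $a,d\in\F_q^*$ since $ad-bc\ne0$. The condition $\Pf(0,1,0,0)\Psi\in\ell_1$ applied to the second row of $\MM$ reduces, after using $b=0$ and the characteristic-$2$ identities $2=0$ and $3=1$, to $[0,a^2d,0,c^2d]$; both $x_0$- and $x_2$-entries vanish automatically for every $c\in\F_q$. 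Hence $\#G_q^{\ell_1}=q(q-1)$ after normalising $a=1$, giving an orbit of size $(q+1)$.

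For $\ell_2$ the same argument again forces $b=0$; then $\Pf(0,1,1,0)\MM$ evaluates in characteristic $2$ to $[0,a^2d,ad^2,cd(c+d)]$, and the extra constraint $x_1+x_2=0$ becomes $a^2d=ad^2$, equivalent to $a=d$. Normalising $a=d=1$ leaves $c\in\F_q$ free, so $\#G_q^{\ell_2}=q$ and the orbit has size $q^2-1$. The explicit matrix forms stated in the theorem then follow by direct substitution of $(a,b)=(1,0)$ (respectively $(a,b,d)=(1,0,1)$) into \eqref{eq2_M} and simplification in characteristic $2$.

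Since $(q+1)+(q^2-1)=q^2+q=\#\O_\UG$, no further orbits are possible, and the different stabiliser sizes show that $\ell_1,\ell_2$ really lie in distinct orbits. The identification of the $(q+1)$-orbit with the regulus complementary to the tangents is immediate from Theorem~\ref{th2_Hirs}(iv)(a), which asserts this regulus is an orbit of size $q+1$ inside $\O_\UG$; since there is only one orbit of that size, it must be the one containing $\ell_1$. The main obstacle is essentially bookkeeping in characteristic $2$: it is precisely the vanishing $2acd=0$ that frees $c$ when analysing $\ell_1$, producing the small orbit of size $q+1$ rather than the $q^2+q$ that would appear in odd characteristic, and the match between the derived stabiliser matrices and those in the theorem statement is most safely confirmed by a direct symbolic verification with Magma or Maple, as done elsewhere in the paper.
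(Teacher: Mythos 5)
Your proposal is correct and follows essentially the same route as the paper: use Lemma \ref{lm:l1inUG} for membership in $\O_\UG$, force $b=0$ from $P_0\Psi\in\ell_j$, read off the image of the second spanning point from \eqref{eq2_M} in characteristic $2$ (where $2acd=0$ frees $c$ for $\ell_1$, while $x_1=x_2$ forces $a=d$ for $\ell_2$), apply orbit--stabilizer, and conclude the partition of $\O_\UG$ and the regulus identification from Theorem \ref{th2_Hirs}(iv)(a). All computations match the paper's; no gaps.
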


\begin{proof}
 By Lemma \ref{lm:l1inUG}, $\ell_1, \ell_2, \in \UG$.
 The line $\ell_1$ has equations $x_0 = 0, x_2 = 0$.  We compute the subgroup of $G_q$ fixing $\ell_1$.
Let  $\Psi\in G_q$ and let $\MM$ be a matrix corresponding to $\Psi$. As in Theorem \ref{th5:O4UGorbitOdd}, $P_0 \Psi\in \ell_1$ implies $b = 0$ and $a,d \neq 0$. On the other hand,
\begin{equation*}
[0,1,0,0]\MM=[0,a^2d,0,c^2d].
\end{equation*}
Then $\Pf(0,1,0,0)  \Psi\in \ell_1$. As $\MM$ is defined up to a factor of proportionality, we can put $a=1$, so $\#G_q^{\ell_1}=q(q-1)$. By  \cite[Lemma 2.44(ii)]{Hirs_PGFF}, the orbit of $\ell_1$ under $G_q$ has size $\#G_q / \#G_q^{\ell_1}= q+1$. On the content of this orbit, see Theorem \ref{th2_Hirs}(iv)(a).

The line $\ell_2$ has equations $x_0 = 0, x_1 = x_2$.  We compute the subgroup of $G_q$ fixing $\ell_2$.
Let  $\Psi\in G_q$ and let $\MM$ be a matrix corresponding to $\Psi$. Again $P_0 \Psi\in \ell_2$ implies $b = 0$ and $a,d \neq 0$. On the other hand,
\begin{equation*}
[0,1,1,0]\MM=[0,a^2d,ad^2,cd^2+cd^2].
\end{equation*}
Then $\Pf(0,1,1,0)  \Psi\in \ell_2$ implies $a=d$. As $\MM$ is defined up to a factor of proportionality, we can put $a=1$, so $\#G_q^{\ell_2}=q$. By  \cite[Lemma 2.44(ii)]{Hirs_PGFF}, the orbit of $\ell_2$ under $G_q$ has size $ q^2-1$. The two orbits are distinct and by Theorem \ref{th2_Hirs}(iv)(a) they form a partition of $\O_\UG$.
\end{proof}

\begin{lem}\label{lm:l1inUnG}
For any $q\ge5$, let $\ell_1=\overline{P_0\Pf(1,0,1,0)}$,  $P_0=\Pf(0,0,0,1)\in\C $and for $q$ odd let $\ell_2=\overline{P_0\Pf(1,0,\rho,0)}$,  $\rho$ is not a square. Then $\ell_1$ and $\ell_2$ are non-tangent unisecants not in a $\Gamma$-plane, i.e. $\ell_1, \ell_2 \in \UnG$.
\end{lem}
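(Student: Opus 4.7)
The plan is to verify directly the three defining properties of a $\UnG$-line for both $\ell_1$ and $\ell_2$: intersecting $\C$ in exactly one point, differing from the tangent at that point, and not lying in any osculating plane. First I would write down implicit equations. The line $\ell_1=\overline{P_0\Pf(1,0,1,0)}$ has parametrisation $\Pf(s,0,s,t)$ and hence equations $x_1=0,\ x_0=x_2$; similarly $\ell_2=\overline{P_0\Pf(1,0,\rho,0)}$ satisfies $x_1=0,\ x_2=\rho x_0$.

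Next, to check that each $\ell_i$ is a unisecant of $\C$, I would intersect with $\C=\{P(u):u\in\F_q^+\}$. For $u\in\F_q$ the equation $x_1=0$ forces $u^2=0$, hence $u=0$ and the intersection point is $P_0$; the remaining cubic point $P_\infty=\Pf(1,0,0,0)$ fails the second equation in each case (since $1\ne 0$ and $0\ne\rho$). Thus $\ell_i\cap\C=\{P_0\}$. Neither line is the tangent $\T_0$, because by Lemma \ref{eqTangents} we have $\T_0:\ x_0=0,\ x_1=0$, while the second generator of $\ell_i$ satisfies $x_0=1$.

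Finally I would show $\ell_i$ lies in no $\Gamma$-plane. From \eqref{eq2_osc_plane}, $\pi_\t{osc}(t)$ has equation $x_0-3tx_1+3t^2x_2-t^3x_3=0$ for $t\in\F_q$, and $\pi_\t{osc}(\infty)$ has equation $x_3=0$. Substituting $P_0=\Pf(0,0,0,1)$ yields respectively $-t^3=0$ and $1\ne 0$, so the only $\Gamma$-plane through $P_0$ is $\pi_\t{osc}(0):\ x_0=0$; this conclusion is valid in every characteristic, since in characteristic $3$ the axis $\Ar$ cuts out $\{x_0=x_3=0\}$ and hence avoids $P_0$. Because $P_0\in\ell_i$, any $\Gamma$-plane containing $\ell_i$ would have to coincide with $\pi_\t{osc}(0)$; but then the second generator, with $x_0=1$, would be forced to satisfy $x_0=0$, a contradiction. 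Hence $\ell_1,\ell_2\in\UnG$.

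The whole argument is a routine verification using the defining equations. There is no substantive obstacle; the only mild point of care is confirming that $P_0$ lies on a unique osculating plane uniformly in $q\bmod 3$, which is immediate once one writes the osculating-plane equations in the characteristic-$3$ form $x_0-t^3x_3=0$.
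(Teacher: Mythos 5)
Your verification is correct and follows essentially the same route as the paper: writing the equations $x_1=0,\ x_0=x_2$ (resp. $x_2=\rho x_0$) and comparing with the tangent $\T_0$ from Lemma \ref{eqTangents} and the osculating planes \eqref{eq2_osc_plane}. In fact you are slightly more explicit than the paper's proof, which only observes that the lines are not real chords and not contained in $\pi_\t{osc}(0)$; your added checks (the intersection with $\C$ is exactly $\{P_0\}$, and $\pi_\t{osc}(0)$ is the unique $\Gamma$-plane through $P_0$ in every characteristic) fill in precisely the steps the paper leaves implicit.
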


\begin{proof}
 By Lemma \ref{eqTangents}, the tangent $\T_0$ to $\C$ at $P_0$ has equations $x_0 = x_1 = 0$, so $\ell_1, \ell_2 \neq \T_0$.
Also, $\ell_1, \ell_2$ are not real chords, as $\ell_1$ has equations $x_0 = x_2 , x_1 = 0$ and $\ell_2$ has equations $\rho x_0 = x_2 , x_1 = 0$.
Finally by \eqref{eq2_plane}, \eqref{eq2_osc_plane}, $\pi_\t{osc}(0)$ has equation $x_0 = 0$, so  $\ell_1, \ell_2 \not\subset  \pi_\t{osc}(0)$.
\end{proof}

\begin{thm}\label{th5:O5UnGorbitEven}
Let $q\ge8$, $q$ even. In $\PG(3,q)$, for the twisted cubic $\C$ of \eqref{eq2_cubic}, the non-tangent unisecants not in a $\Gamma$-plane \emph{(}i.e. $\UnG$-lines, class $\O_5=\O_\UnG$\emph{)} form an orbit under~$G_q$. The orbit  can be represented in the form $\{\ell \varphi|\varphi\in G_q\}$, where $\ell=\overline{P_0\Pf(1,0,1,0)}$, $P_0=\Pf(0,0,0,1)\in\C$. The subgroup of $G_q$ fixing any $\UG$-line is trivial.
\end{thm}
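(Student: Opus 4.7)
The plan is to apply orbit--stabilizer: since $\#G_q=q(q^2-1)=q^3-q=\#\O_\UnG$ by Theorem~\ref{th2_Hirs}(i)(iv)(a), it suffices to exhibit one $\UnG$-line $\ell$ whose stabilizer $G_q^\ell$ is trivial; the single orbit $\{\ell\varphi:\varphi\in G_q\}$ then has size $q^3-q$ and must therefore fill the entire class $\O_\UnG$.

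First, by Lemma \ref{lm:l1inUnG} the line $\ell=\overline{P_0\Pf(1,0,1,0)}$ is a $\UnG$-line; its equations are $x_0=x_2$ and $x_1=0$ (obtained by parametrizing points of $\ell$ as $\lambda(0,0,0,1)+\mu(1,0,1,0)$). Next, I would take a general $\Psi\in G_q^\ell$ with matrix $\MM$ as in \eqref{eq2_M} and force the two generating points $P_0=\Pf(0,0,0,1)$ and $\Pf(1,0,1,0)$ to be sent into $\ell$. From \eqref{eq2_M} one has
\[
[0,0,0,1]\MM=[b^3,\,b^2d,\,bd^2,\,d^3].
\]
Requiring this to satisfy $x_1=0$ and $x_0=x_2$ gives $b^2d=0$ and $b^3=bd^2$, so either $b=0$ or $d=0$. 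If $d=0$ then $b\ne 0$ (because $ad-bc\ne 0$), and the image of $P_0$ becomes $\Pf(b^3,0,0,0)$, which fails $x_0=x_2$; hence $b=0$ and $a,d\ne 0$. Normalising $a=1$ and using char$\,2$ (so $3=1$) reduces $\MM$ to the upper-triangular form with first row $(1,c,c^2,c^3)$, second row $(0,d,0,c^2d)$, third row $(0,0,d^2,cd^2)$, fourth row $(0,0,0,d^3)$.

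Now apply this reduced $\MM$ to $[1,0,1,0]$: I compute $[1,0,1,0]\MM=[1,\,c,\,c^2+d^2,\,c^3+cd^2]$. Imposing $x_1=0$ forces $c=0$, and then $x_0=x_2$ becomes $1=d^2$; since Frobenius is injective on $\F_q$ (char $2$), this yields $d=1$. Hence $\MM=I$ and $G_q^\ell=\{\mathrm{id}\}$ is trivial. The main (very mild) obstacle is keeping track of the characteristic-$2$ simplifications ($3=1$ and $d^2=1\Rightarrow d=1$) correctly while writing out the 4$\times$4 matrix entries; beyond that, the argument is routine orbit--stabilizer bookkeeping and concludes the proof.
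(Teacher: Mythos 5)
Your proposal is correct and follows essentially the same route as the paper: take the representative $\ell=\overline{P_0\Pf(1,0,1,0)}$ (a $\UnG$-line by Lemma \ref{lm:l1inUnG}), force the images of $P_0$ and $\Pf(1,0,1,0)$ under a matrix of the form \eqref{eq2_M} back onto $\ell$ to get $b=c=0$ and $d=a$ (using characteristic $2$), conclude the stabilizer is trivial, and apply orbit--stabilizer so the orbit has size $q^3-q=\#\O_\UnG$ and exhausts the class. The only cosmetic difference is that you normalize $a=1$ and reduce the matrix before the second point computation, whereas the paper keeps $a$ general until the end; the substance is identical.
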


\begin{proof}
We argue similar to the proof of Theorem \ref{th5:O4UGorbitOdd}.
Consider the line  $\ell=\overline{P_0\Pf(1,0,1,0)}$. The line $\ell$ has equations $x_1 = 0, x_0 = x_2$.  By Lemma \ref{lm:l1inUnG}, $\ell  \in \UnG$. We compute the subgroup of $G_q$ fixing $\ell$.
Let  $\Psi\in G_q$ and let $\MM$ be a matrix corresponding to $\Psi$.
\begin{equation*}\label{ThS5:stab_eq1}
[0,0,0,1]\MM=[b^3,b^2d,bd^2,d^3].
\end{equation*}
Then $P_0 \Psi\in \ell$ implies $b^2d = 0$. If $d=0$, $P_0 \Psi = \Pf(1,0,0,0) \notin \ell$, contradiction. Therefore $b = 0$ and $a,d \neq 0$, as $ad-bc \neq 0$. On the other hand,
\begin{equation*}
[1,0,1,0]\MM=[a^3,a^2c,ac^2+ad^2,c^3+cd^2].
\end{equation*}
Then $\Pf(1,0,1,0)  \Psi\in \ell$ implies $c = 0$ and $a^2=d^2$. As $q$ is even, this implies $d=a$. As $\MM$ is defined up to a factor of proportionality, we can put $a=1$, so $\MM$ is the identity matrix and $\#G_q^{\ell}=1$. By  \cite[Lemma 2.44(ii)]{Hirs_PGFF}, the orbit of $\ell$ under $G_q$ has size $q^3-q = \#\O_\UnG$.
\end{proof}

\begin{corollary}\label{cor5:O5'EGeven}
Let $q\not\equiv0\pmod 3$, $q$ even. In $\PG(3,q)$, for the twisted cubic $\C$ of \eqref{eq2_cubic},
    the external lines in a $\Gamma$-plane   \emph{(}class $\O_5'=\O_\EG$\emph{)} form an orbit under~$G_q$. The line $\ell = \overline{\Pf(0,0,1,0) \Pf(0,1,0,1)}$ is an $\EG$-line.
 The subgroup of $G_q$ fixing any $\EG$-line is trivial.
\end{corollary}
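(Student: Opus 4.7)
The plan is to reduce this statement to the already-established result for $\UnG$-lines (Theorem~\ref{th5:O5UnGorbitEven}) by transporting it via the null polarity $\A$. The whole argument is essentially three short applications of earlier results plus one direct coordinate calculation, with almost no real obstruction.

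First, I would invoke Theorem~\ref{th5_null_pol}, which is available because the hypothesis $q\not\equiv 0\pmod 3$ is part of the statement. Taking $\Ll=\O_5=\O_\UnG$, Theorem~\ref{th5:O5UnGorbitEven} guarantees that $\Ll$ is a single orbit under $G_q$, so Theorem~\ref{th5_null_pol} yields that $\Ll\A$ is also a single orbit. By Theorem~\ref{th2_Hirs}(iv)(a) we have $\O'_5=\O_5\A$, i.e.\ $\O_\EG=\O_\UnG\A$. This immediately gives the orbit assertion.

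Next, to exhibit the explicit $\EG$-line, I would start from the line $\ell_0=\overline{P_0\Pf(1,0,1,0)}$, which by Theorem~\ref{th5:O5UnGorbitEven} is a $\UnG$-line. Using formula \eqref{eq2_null_pol}, one computes $P_0\A=\boldsymbol{\pi}(1,0,0,0)$ and $\Pf(1,0,1,0)\A=\boldsymbol{\pi}(0,-3,0,-1)$. In characteristic~$2$ the second plane is $x_1+x_3=0$ and the first is $x_0=0$, so their intersection is exactly the set of points $\Pf(0,a,b,a)$ with $(a,b)\ne(0,0)$, which contains both $\Pf(0,0,1,0)$ and $\Pf(0,1,0,1)$. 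Hence $\ell_0\A=\ell=\overline{\Pf(0,0,1,0)\,\Pf(0,1,0,1)}$, and by Theorem~\ref{th2_Hirs}(iv)(a) $\ell$ is an $\EG$-line.

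Finally, for the stabilizer I would apply Lemma~\ref{lemma_same_stab}, which says that $G_q^\ell=G_q^{\ell_0}$ since $\ell=\ell_0\A$. By Theorem~\ref{th5:O5UnGorbitEven} the subgroup $G_q^{\ell_0}$ is trivial, hence so is $G_q^\ell$, and by conjugacy the same holds for every $\EG$-line. The only step requiring any computation is the verification that $\ell_0\A$ equals the claimed $\ell$, and even this is routine once \eqref{eq2_null_pol} is applied; I anticipate no genuine obstacle here, since the heavy lifting (both the single-orbit statement for $\O_\UnG$ and the triviality of its stabilizer) has already been done for $\UnG$-lines in the even characteristic case.
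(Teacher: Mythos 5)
Your proposal is correct and follows essentially the same route as the paper: the single-orbit claim via Theorem~\ref{th5_null_pol} together with Theorem~\ref{th2_Hirs}(iv)(a) and Theorem~\ref{th5:O5UnGorbitEven}, the explicit line as the image under $\A$ of the $\UnG$-line $\overline{P_0\Pf(1,0,1,0)}$, and the trivial stabilizer via Lemma~\ref{lemma_same_stab}. Your polarity computation, giving $\boldsymbol{\pi}(1,0,0,0)\cap\boldsymbol{\pi}(0,-3,0,-1)$ (i.e.\ $x_0=0$, $x_1+x_3=0$ in characteristic $2$), is in fact the correct form of the intermediate step, which the paper's proof records with misprinted plane coordinates while arriving at the same line $\overline{\Pf(0,0,1,0)\,\Pf(0,1,0,1)}$.
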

\begin{proof}
The first assertion follows from Theorems \ref{th2_Hirs}(iv)(a), \ref{th5_null_pol}, \ref{th5:O5UnGorbitEven}.  By Lemma \ref{lm:l1inUnG},
The line $\ell'=\overline{P_0\Pf(1,0,1,0)}$, $P_0=\Pf(0,0,0,1)$ is a $\UnG$-line, so the line $\ell' \A = \boldsymbol{\pi}(0,0,0,1) \cap \boldsymbol{\pi}(1,0,1,0) = $
 $\overline{\Pf(0,0,1,0) \Pf(0,1,0,1)}$ is an $\EG$-line by Theorem \ref{th2_Hirs}(iv)(a).
Finally we apply Lemma \ref{lemma_same_stab}.
\end{proof}

\begin{thm}\label{th5:O5UnGorbitOdd}
Let $q\ge5$, $q$ odd. In $\PG(3,q)$, for the twisted cubic $\C$ of \eqref{eq2_cubic},
 the non-tangent unisecants not in a $\Gamma$-plane \emph{(}i.e. $\UnG$-lines, class $\O_5=\O_\UnG$\emph{)} form two orbits of size $(q^3-q)/2$. The two orbits can be represented in the form $\{\ell_j\varphi|\varphi\in G_q\}$, $j=1,2$, where $\ell_1=\overline{P_0\Pf(1,0,1,0)}$,  $\ell_2=\overline{P_0\Pf(1,0,\rho,0)}$, $P_0=\Pf(0,0,0,1)\in\C$, $\rho$ is not a square. Moreover, the $\UnG$-lines $\ell_1$ and $\ell_2$ are fixed by the same subgroup $G_q^{\ell_1, \ell_2}$ of $G_q$ of size $2$. An element $\Psi$ of $G_q^{\ell_1, \ell_2}$ has a matrix of the form:
 \begin{equation}\label{S5:stab_matUnGodd}
   \MM^{\ell_1, \ell_2}=\left[
 \begin{array}{cccc}
 1&0&0&0\\
 0&d&0&0\\
 0&0&d^2&0\\
 0&0&0&d^3
 \end{array}
  \right], \;\;d \in \{1,-1\}.
\end{equation}

\end{thm}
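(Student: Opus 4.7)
The plan is to adapt the strategy of Theorem \ref{th5:O5UnGorbitEven}, where the only real difference is that in odd characteristic the equation $a^2=d^2$ has two solutions rather than one, which produces a non-trivial stabilizer and splits the class $\O_\UnG$ into two orbits.

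First, by Lemma \ref{lm:l1inUnG}, both $\ell_1$ and $\ell_2$ belong to $\UnG$. To compute $G_q^{\ell_1}$, take $\Psi\in G_q$ with matrix $\MM$ as in \eqref{eq2_M}. Since $\ell_1$ has equations $x_1=0$, $x_0=x_2$, the condition $P_0\Psi=\Pf(b^3,b^2d,bd^2,d^3)\in\ell_1$ forces $b^2d=0$. The case $d=0$ gives $P_0\Psi=\Pf(1,0,0,0)\notin\ell_1$, so $b=0$ and $a,d\ne 0$. Normalizing $a=1$ and computing
\[
[1,0,1,0]\MM=[1,\,c,\,c^2+d^2,\,c(c^2+3d^2)],
\]
membership in $\ell_1$ yields $c=0$ and $d^2=1$. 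Since $q$ is odd, this has exactly two solutions $d=\pm 1$, so $\#G_q^{\ell_1}=2$ and the matrices are those of \eqref{S5:stab_matUnGodd}.

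An identical calculation for $\ell_2$ (equations $x_1=0$, $\rho x_0=x_2$) again forces $b=0$, $a=1$, and from
\[
[1,0,\rho,0]\MM=[1,\,c,\,c^2+\rho d^2,\,c(c^2+3\rho d^2)]
\]
the conditions $x_1=0$ and $\rho x_0=x_2$ give $c=0$ and $d^2=1$. Thus $G_q^{\ell_2}=G_q^{\ell_1}$, as asserted. By the orbit-stabilizer theorem \cite[Lemma~2.44(ii)]{Hirs_PGFF}, each of the two orbits $\{\ell_j\varphi\mid\varphi\in G_q\}$ has size $\#G_q/2=(q^3-q)/2$.

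The main (and only) delicate step is to show that $\ell_1$ and $\ell_2$ actually lie in \emph{different} $G_q$-orbits, so that together they exhaust $\O_\UnG$ (whose cardinality is $q^3-q$). The key observation is that every $\UnG$-line meets $\C$ in a unique point; for both $\ell_1$ and $\ell_2$ this point is $P_0$. Hence if $\ell_1\varphi=\ell_2$ for some $\varphi\in G_q$, then $\varphi(P_0)\in\ell_2\cap\C=\{P_0\}$, so $\varphi$ fixes $P_0$ and its matrix has $b=0$. Normalizing $a=1$, the image of the second generator of $\ell_1$ is $[1,c,c^2+d^2,c(c^2+3d^2)]$, which must lie on $\ell_2$; the condition $x_1=0$ forces $c=0$, after which $\rho x_0=x_2$ becomes $\rho=d^2$, contradicting the non-squareness of $\rho$. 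Consequently the two orbits are distinct, and since $2\cdot(q^3-q)/2=\#\O_\UnG$, they partition the class $\O_\UnG$.
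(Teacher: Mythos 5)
Your proposal is correct and follows essentially the same route as the paper: compute the stabilizers of $\ell_1$ and $\ell_2$ via the matrix \eqref{eq2_M} (forcing $b=0$, $c=0$, $d^2=a^2$), apply the orbit--stabilizer lemma, and show the orbits are distinct by assuming $\ell_1\varphi=\ell_2$ and deriving $\rho=d^2$, contradicting non-squareness. Your observation that $\varphi$ must fix $P_0$ because $\ell_1\cap\C=\ell_2\cap\C=\{P_0\}$ is a slightly cleaner way to get $b=0$ in the distinctness step (and it maps the correct generator $\Pf(1,0,1,0)$ of $\ell_1$ into $\ell_2$), but the substance is the same as the paper's argument.
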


\begin{proof}
We argue similar to the proof of Theorem \ref{th5:O4UGorbitEven}.
By Lemma \ref{lm:l1inUnG}, $\ell_1, \ell_2, \in \UnG$.
 The line $\ell_1$ has equations $x_1 = 0, x_0 = x_2$.  We compute the subgroup of $G_q$ fixing $\ell_1$.
Let  $\Psi\in G_q$ and let $\MM$ be a matrix corresponding to $\Psi$. As in Theorem \ref{th5:O4UGorbitEven}, $P_0 \Psi\in \ell_1$ implies $b = 0$ and $a,d \neq 0$. On the other hand,
\begin{equation*}
[1,0,1,0]\MM=[a^3,a^2c,ac^2+ad^2,c^3+3cd^2].
\end{equation*}
Then $\Pf(1,0,1,0)  \Psi\in \ell_1$ implies c=0 and $d^2=a^2$. As $\MM$ is defined up to a factor of proportionality, we can put $a=1$, so $\#G_q^{\ell_1}=2$. By  \cite[Lemma 2.44(ii)]{Hirs_PGFF}, the orbit of $\ell_1$ under $G_q$ has size $\#G_q / \#G_q^{\ell_1}= (q^3-q)/2$.

The line $\ell_2$ has equations $x_1 = 0, \rho x_0 = x_2$.  We compute the subgroup of $G_q$ fixing $\ell_2$.
Let  $\Psi\in G_q$ and let $\MM$ be a matrix corresponding to $\Psi$. Again $P_0 \Psi\in \ell_2$ implies $b = 0$ and $a,d \neq 0$. On the other hand,
\begin{equation*}
[1,0,\rho,0]\MM=[a^3,a^2c,ac^2+\rho ad^2,c^3+3\rho cd^2].
\end{equation*}
Then $\Pf(1,0,1,0)  \Psi\in \ell_1$  implies $c=0$ and $\rho a^2=\rho d^2$. As $\MM$ is defined up to a factor of proportionality, we can put $a=1$, so $\#G_q^{\ell_2}=2$. By  \cite[Lemma 2.44(ii)]{Hirs_PGFF}, the orbit of $\ell_2$ under $G_q$ has size $(q^3-q)/2$.

The lines $\ell_1$ and $\ell_2$ belong to different orbits. Suppose there exists  $\Psi\in G_q$ such that $\ell_1 \Psi = \ell_2$ and let $\MM$ be a matrix corresponding to $\Psi$.
Then
\begin{equation*}
[0,0,0,1]\MM=[b^3,b^2d,bd^2,d^3]
\end{equation*}
and $P_0 \Psi\in \ell_2$ implies $b^2d = 0$, $\rho b^3=bd^2$. If $b \neq 0$, $\rho b^2=d^2$, contradiction as $\rho$ is not a square.  If $b = 0$, then $a,d \neq 0$. Consider
\begin{equation*}
[1,0,\rho,0]\MM=[a^3,a^2c,ac^2+\rho ad^2,c^3+3\rho cd^2].
\end{equation*}
Then $\Pf(1,0,1,0)  \Psi\in \ell_2$  implies $c=0$ and $\rho a^2= d^2$,  contradiction as $\rho$ is not a square.
Therefore the two orbits are distinct and by Theorem \ref{th2_Hirs}(iv)(a) they form a partition of $\O_\UnG$.
\end{proof}

\begin{corollary}\label{cor5:O5'EGodd}
Let $q\not\equiv0\pmod 3$, $q$ odd. In $\PG(3,q)$, for the twisted cubic $\C$ of \eqref{eq2_cubic},
    the external lines in a $\Gamma$-plane   \emph{(}class $\O_5'=\O_\EG$\emph{)} form  two orbits of size $(q^3-q)/2$. The two orbits can be represented in the form $\{\ell_j\varphi|\varphi\in G_q\}$, $j=1,2$, where $\ell_j=\pk_0\cap\pk_j$ is the intersection line of planes $\pk_0$ and $\pk_j$ such that
           $\pk_0=\boldsymbol{\pi}(1,0,0,0)=\pi_\t{\emph{osc}}(0)$, $\pk_1=\boldsymbol{\pi}(0,-3,0,-1)$,  $\pk_2=\boldsymbol{\pi}(0,-3\rho,0,-1)$, $\rho$ is not a square, cf. \eqref{eq2_plane}, \eqref{eq2_osc_plane}.
           Moreover, the $\EG$-lines $\ell_1$ and $\ell_2$ are fixed by the same subgroup $G_q^{\ell_1, \ell_2}$ of $G_q$ of size $2$. An element $\Psi$ of $G_q^{\ell_1, \ell_2}$ has a matrix of the form
 \eqref{S5:stab_matUnGodd}.
\end{corollary}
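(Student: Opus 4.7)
The plan is to mirror the proof of Corollary \ref{cor5:O5'EGeven}: apply the null polarity $\A$ to the two $\UnG$-orbits identified in Theorem \ref{th5:O5UnGorbitOdd} and invoke Theorems \ref{th2_Hirs}(iv)(a), \ref{th5_null_pol}, and Lemma \ref{lemma_same_stab} to transfer both the orbit partition and the stabilizers.

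First, I would combine Theorem \ref{th5_null_pol} with Theorem \ref{th2_Hirs}(iv)(a): since $\A$ sends orbits to orbits and sends the class $\O_5 = \O_\UnG$ onto $\O_5' = \O_\EG$ preserving cardinalities, the two $\UnG$-orbits of size $(q^3-q)/2$ produced by Theorem \ref{th5:O5UnGorbitOdd}, namely $\{\ell_j'\varphi\mid \varphi\in G_q\}$ with $\ell_1'=\overline{P_0\Pf(1,0,1,0)}$ and $\ell_2'=\overline{P_0\Pf(1,0,\rho,0)}$, map under $\A$ to two orbits of $\EG$-lines of the same size. Together these exhaust $\#\O_\EG=q^3-q$, so they partition $\O_\EG$.

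Second, I would identify the representatives explicitly. Using the formula \eqref{eq2_null_pol} for $\A$, one computes $P_0\A=\Pf(0,0,0,1)\A=\boldsymbol{\pi}(1,0,0,0)=\pk_0=\pi_{\t{osc}}(0)$, $\Pf(1,0,1,0)\A=\boldsymbol{\pi}(0,-3,0,-1)=\pk_1$, and $\Pf(1,0,\rho,0)\A=\boldsymbol{\pi}(0,-3\rho,0,-1)=\pk_2$. Hence $\ell_j'\A=\pk_0\cap\pk_j=\ell_j$ for $j=1,2$, matching the stated representatives.

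Finally, for the stabilizer claim, I would invoke Lemma \ref{lemma_same_stab}: since $\ell_j=\ell_j'\A$, the subgroup $G_q^{\ell_j}$ fixing $\ell_j$ coincides with $G_q^{\ell_j'}$. By Theorem \ref{th5:O5UnGorbitOdd} the latter is the common order-$2$ subgroup $G_q^{\ell_1',\ell_2'}$ with matrices of the form \eqref{S5:stab_matUnGodd}, so the same description transfers verbatim. No step poses a real obstacle; the only place to be careful is the sign bookkeeping in applying \eqref{eq2_null_pol} to extract $\pk_1$ and $\pk_2$ from $\ell_1'$ and $\ell_2'$.
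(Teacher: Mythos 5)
Your proposal is correct and follows essentially the same route as the paper's proof: apply the null polarity $\A$ to the two $\UnG$-orbits of Theorem \ref{th5:O5UnGorbitOdd}, use Theorems \ref{th2_Hirs}(iv)(a) and \ref{th5_null_pol} to get two $\EG$-orbits of size $(q^3-q)/2$ with the stated representatives $\ell_j=\pk_0\cap\pk_j$, and transfer the order-$2$ stabilizer via Lemma \ref{lemma_same_stab}. Your polarity computations of $\pk_0,\pk_1,\pk_2$ agree with the paper, and the small counting remark that the two image orbits exhaust $\O_\EG$ is just an explicit version of what the paper leaves implicit.
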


\begin{proof}
The first assertion follows from Theorems \ref{th2_Hirs}(iv)(a), \ref{th5_null_pol} and \ref{th5:O5UnGorbitOdd}.
  The null polarity $\A$ \eqref{eq2_null_pol}
  maps the points $P_0=\Pf(0,0,0,1)$, $P'=\Pf(1,0,1,0)$, and $P''=\Pf(1,0,\rho,0)$
 of Theorem \ref{th5:O5UnGorbitOdd}
to the planes $\pk_0=\boldsymbol{\pi}(1,0,0,0)$, $\pk_1=\boldsymbol{\pi}(0,-3,0,-1)$, and $\pk_2=\boldsymbol{\pi}(0,-3\rho,0,-1)$, respectively. The $\UnG$-lines $\ell'=\overline{P_0P'}$ and  $\ell''=\overline{P_0P''}$ are mapped to $\EG$-lines so that $\ell'\A=\pk_0\cap\pk_1\triangleq\ell_1$ and $\ell''\A=\pk_0\cap\pk_2\triangleq\ell_2$.
Finally we apply Lemma \ref{lemma_same_stab}.
\end{proof}

\section{Orbits of $\EA$-lines under $G_q$, $q\equiv 0 \pmod 3$}\label{sec:orbEA}

In the following we consider $q\equiv 0 \pmod 3$, $q \geq 9$, and denote by $\ell_{\Ar}$  the  axis of $\Gamma$, by  $P_{\infty}^{\Ar}$ the point  $\Pf(0,0,1,0)$  of $\ell_{\Ar}$ and by $P_t^{\Ar}$ the point $\Pf(0,1,t,0)$  of $\ell_{\Ar}$ with $t\in\F_q$. The line $\ell_{\Ar}$ is the intersection of the osculating planes, so has equations $x_0 =  x_3 = 0$, and $P_{\Ar} \in \ell_{\Ar}$. Recall that by Theorem \ref{th2_Hirs}(iv)(b),  $\ell_{\Ar}$ is fixed by  $G_q$.


\begin{proposition}\label{lem5:GqtranslA}
\looseness - 1    Let $q\equiv0\pmod 3$, $q \geq 9$. The group $G_q$ acts transitively on $\ell_{\Ar}$.
\end{proposition}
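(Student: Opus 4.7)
The plan is to argue by direct computation using the explicit matrix form \eqref{eq2_M} of elements of $G_q$, in the simplified guise it takes when $q \equiv 0 \pmod 3$. Since $G_q$ already fixes $\ell_\Ar$ setwise (Theorem \ref{th2_Hirs}(iv)(b)), transitivity on $\ell_\Ar$ will follow once I show that the orbit of a single chosen point, say $P_\infty^\Ar = \Pf(0,0,1,0)$, contains every point of $\ell_\Ar$.

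First, I specialize the matrix $\MM$ of \eqref{eq2_M} to characteristic $3$: all entries with coefficient $3$ vanish, and in the two middle rows the coefficient $2$ becomes $-1$. Consequently, the third row of $\MM$, which governs the image of $P_\infty^\Ar$ under the right action $[x_0,x_1,x_2,x_3]\MM$, reduces to
\begin{equation*}
[\,3ab^2,\; b^2c + 2abd,\; ad^2 + 2bcd,\; 3cd^2\,] \;=\; [\,0,\; b^2c - abd,\; ad^2 - bcd,\; 0\,] \;=\; (ad-bc)\,[\,0,-b,d,0\,].
\end{equation*}
Since $ad - bc \neq 0$, the projectivity sends $P_\infty^\Ar$ to $\Pf(0,-b,d,0)$, which lies on $\ell_\Ar$ (consistent with $G_q$ stabilizing $\ell_\Ar$).

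Next, I show that $\Pf(0,-b,d,0)$ sweeps out all of $\ell_\Ar$ as $(a,b,c,d)$ ranges over admissible quadruples. The identity fixes $P_\infty^\Ar$ itself. To reach $P_s^\Ar = \Pf(0,1,s,0)$ for $s \in \F_q^*$, take $(a,b,c,d) = (1,-1,0,s)$; then $ad-bc = s \neq 0$, and the image is $\Pf(0,1,s,0) = P_s^\Ar$. To reach $P_0^\Ar = \Pf(0,1,0,0)$, take $(a,b,c,d) = (0,-1,1,0)$; then $ad-bc = 1 \neq 0$, and the image is $\Pf(0,1,0,0) = P_0^\Ar$. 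Thus the $G_q$-orbit of $P_\infty^\Ar$ equals $\ell_\Ar$, so $G_q$ acts transitively on the axis.

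There is essentially no obstacle here: the proof is a mechanical consequence of the explicit matrix description. The only points requiring care are the characteristic-$3$ simplification of the middle-row entries of \eqref{eq2_M} and a quick verification that the chosen parameter quadruples satisfy $ad-bc\neq 0$.
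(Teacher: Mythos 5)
Your proposal is correct and follows essentially the same route as the paper: an explicit computation with the matrix \eqref{eq2_M}, reduced modulo the characteristic $3$, showing that the $G_q$-orbit of a single point of the axis already covers all of $\ell_{\Ar}$. The only (cosmetic) difference is your choice of base point $P_\infty^{\Ar}=\Pf(0,0,1,0)$ and the general row computation before specializing, whereas the paper starts from $P_0^{\Ar}=\Pf(0,1,0,0)$ and simply exhibits two parameter choices reaching $P_\infty^{\Ar}$ and $P_n^{\Ar}$ for every $n\in\F_q$.
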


\begin{proof}
If we take $\varphi \in G_q$ whose matrix in the form \eqref{eq2_M} has $a=0$, $b=c=d=1$, then $P_0^{\Ar} \varphi =\Pf (0, 0, 1,0)=P_{\infty}^{\Ar}$.
If we take $\varphi \in G_q$ whose matrix in the form \eqref{eq2_M} has $a=d=1, b=0, c=-n$, then $ P_0^{\Ar} \varphi =\Pf (0, 1, n,0) = P_n^{\Ar}$.
\end{proof}

\begin{lem}\label{lm:l1inEA}
For any $q\equiv0\pmod 3$, $q \geq 9$, let $\ell_1=\overline{P_0^\Ar\Pf(0,0,1,1)}$,  $\ell_2=\overline{P_0^\Ar\Pf(1,0,1,0)}$, $\ell_3=\overline{P_0^\Ar\Pf(1,0,\rho,0)}$, $P_0^\Ar=\Pf(0,1,0,0)$, $\rho$ is not a square. Then $\ell_1, \ell_2, \ell_3 \in \EA$.
\end{lem}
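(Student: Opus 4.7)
The proof strategy is direct and mostly computational: by definition, an $\EA$-line is an external line (disjoint from $\C$) that meets the axis $\ell_{\Ar}$ of $\Gamma$. So for each $j\in\{1,2,3\}$ I need to verify two things: (a) $\ell_j \cap \ell_{\Ar}\ne\emptyset$ and $\ell_j \ne \ell_{\Ar}$, and (b) $\ell_j\cap\C=\emptyset$.

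For (a), observe that all three lines $\ell_1,\ell_2,\ell_3$ are defined as passing through the point $P_0^{\Ar}=\Pf(0,1,0,0)$, and $P_0^{\Ar}$ satisfies the defining equations $x_0=x_3=0$ of $\ell_{\Ar}$; hence each $\ell_j$ shares the point $P_0^{\Ar}$ with the axis. The second generator of each $\ell_j$, namely $\Pf(0,0,1,1)$, $\Pf(1,0,1,0)$, $\Pf(1,0,\rho,0)$, respectively, fails one of $x_0=0$ or $x_3=0$, so none of these lines coincide with $\ell_{\Ar}$. Thus each $\ell_j$ meets $\ell_{\Ar}$ in exactly the single point $P_0^{\Ar}$.

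For (b), I would parametrize the general point of $\ell_j$ as $\lambda\,\Pf(0,1,0,0) + \mu\,Q_j$ (with $Q_j$ the second generator) and show it cannot be proportional to $P(t)=\Pf(t^3,t^2,t,1)$ for any $t\in\F_q$ nor to $P(\infty)=\Pf(1,0,0,0)$. Concretely: on $\ell_1$ a general point has coordinates $(0,\lambda,\mu,\mu)$; comparison with $P(t)$ forces $t^3=0$, hence $t=0$, forcing $\mu=0$ and then $\lambda=0$; comparison with $P(\infty)$ fails at the first coordinate. On $\ell_2$ and $\ell_3$ a general point has fourth coordinate $0$, ruling out $P(t)$ with $t\in\F_q$; and comparison with $P(\infty)$ forces the middle coordinates to vanish, giving a contradiction either with $\mu\ne0$ (for $\ell_2$) or with the non-squareness of $\rho$ (for $\ell_3$).

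I expect no serious obstacle here: the three verifications are elementary coordinate checks, and the only thing to be careful about is remembering both the affine parametrization $P(t)$ and the point $P(\infty)$ of $\C$, plus double-checking that the lines are distinct from $\ell_{\Ar}$ itself (so that "meeting" the axis indeed puts them into the $\EA$ class rather than accidentally equaling the $\Ar$-line). Once (a) and (b) are checked for each $j$, the lemma follows from the definition of $\EA$-lines in Notation~\ref{notation_1}.
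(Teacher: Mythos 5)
Your proposal is correct and takes essentially the same route as the paper's proof, which simply writes down the defining equations of $\ell_1,\ell_2,\ell_3$ (namely $x_0=0,\,x_2=x_3$; $x_3=0,\,x_0=x_2$; $x_3=0,\,\rho x_0=x_2$) and observes that each line misses $\C$ and meets the axis $\ell_\Ar:\ x_0=x_3=0$ exactly in $P_0^\Ar$. One cosmetic remark: for $\ell_3$ the contradiction in the $P(\infty)$ check only uses $\rho\neq0$ (which non-squareness guarantees), not the non-squareness itself.
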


\begin{proof}
As pairs of equations defining $\ell_1$, $\ell_2$, $\ell_3$ are $x_0 = 0, x_2=x_3$;
 $x_3 = 0$, $x_0=x_2$; $x_3 = 0, \rho x_0=x_2$, respectively,  $\ell_i \cap \C = \emptyset$ and  $\ell_i \cap \ell_{\Ar} = P_0^\Ar, i = 1,2,3.$
\end{proof}

\begin{thm}\label{th5:EAorbit}
For any $q\equiv0\pmod 3,\; q\ge9$, in $\PG(3,q)$, for the twisted cubic $\C$ of \eqref{eq2_cubic},
 the external lines meeting the axis of $\Gamma$ \emph{(}i.e. $\EA$-lines, class $\O_8=\O_\EA$\emph{)} form three orbits under~$G_q$  of size $q^3-q, (q^2-1)/2, (q^2-1)/2$. The $(q^3-q)$-orbit and the two $(q^2-1)/2$-orbits can be represented in the form $\{\ell_1\varphi|\varphi\in G_q\}$ and $\{\ell_j\varphi|\varphi\in G_q\}$, $j=2,3$, respectively, where $\ell_1=\overline{P_0^\Ar\Pf(0,0,1,1)}$,  $\ell_2=\overline{P_0^\Ar\Pf(1,0,1,0)}$, $\ell_3=\overline{P_0^\Ar\Pf(1,0,\rho,0)}$, $P_0^\Ar=\Pf(0,1,0,0)$, $\rho$ is not a square.
The group $G_q^{\ell_1}$  is trivial. The $\EA$-lines $\ell_2$ and $\ell_3$ are fixed by the same subgroup $G_q^{\ell_2,\ell_3}$  of $G_q$ of size $2q$. An element $\Psi$ of $G_q^{\ell_2,\ell_3}$  has a matrix of the form:
 \begin{equation}\label{S5:stab_mat}
   \MM^{\ell_2,\ell_3}=\left[
 \begin{array}{cccc}
 1&0&0&0\\
 0&d&0&0\\
 0&-bd&d^2&0\\
 b^3&b^2d&bd^2&d^3
 \end{array}
  \right], \;\;b \in \F_q, d \in \{1,-1\}.
\end{equation}

\end{thm}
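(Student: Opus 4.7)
The plan is to argue along the same lines as Theorems \ref{th5:O4UGorbitEven} and \ref{th5:O5UnGorbitOdd}: compute the stabilizer of each $\ell_i$ explicitly by working with the general matrix \eqref{eq2_M}, apply the orbit--stabilizer theorem, use a non-square argument to separate the two orbits of equal size, and finally check that the three orbit sizes sum to $\#\O_\EA$. That $\ell_1,\ell_2,\ell_3 \in \EA$ is already given by Lemma \ref{lm:l1inEA}.

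Before doing the case analysis it will be crucial to exploit the fact that $q \equiv 0 \pmod 3$, so $\F_q$ has characteristic $3$ and the matrix \eqref{eq2_M} simplifies drastically: every entry carrying a factor $3$ vanishes and every $2$ becomes $-1$. In particular the four corner entries of columns $0$ and $3$ remain unchanged while the central $2\times 2$ block becomes proportional to $(ad-bc)\bigl[\begin{smallmatrix}a & -c\\ -b & d\end{smallmatrix}\bigr]$. For each line $\ell_i = \overline{P_0^\Ar\, Q_i}$, I would impose the two conditions $[0,1,0,0]\MM \in \ell_i$ and $Q_i\MM \in \ell_i$ using the equations of $\ell_i$ read off from Lemma \ref{lm:l1inEA}. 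The first condition will already force $c=0$ in every case (because of the characteristic-$3$ simplification), after which the second condition reduces to a clean polynomial identity in $a,b,d$. For $\ell_1$ it will also force $b=0$ and $a=d$, giving a trivial stabilizer; for $\ell_2$ and $\ell_3$ it will force $a^2=d^2$ (respectively $\rho a^2 = d^2$ reduces, under $c=0$, to the same $a^2=d^2$ identity once rewritten), leaving $b$ free and $a/d \in \{1,-1\}$, so that $G_q^{\ell_j}$ has order $2q$ and has the explicit shape displayed in \eqref{S5:stab_mat}. Orbit--stabilizer with $|G_q|=q^3-q$ then yields orbit sizes $q^3-q$ and $(q^2-1)/2$ as claimed.

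The main obstacle is the claim that $\ell_2$ and $\ell_3$ belong to \emph{distinct} orbits (and that the stabilizers coincide as \emph{subgroups}, not just up to conjugacy). For this I would use the key structural fact from Theorem \ref{th2_Hirs}(iv)(b) that $\ell_\Ar$ is fixed set-wise by all of $G_q$. Thus any $\Psi \in G_q$ sending $\ell_2$ to $\ell_3$ must send $\ell_2 \cap \ell_\Ar = P_0^\Ar$ to $\ell_3 \cap \ell_\Ar = P_0^\Ar$, hence $\Psi$ stabilizes $P_0^\Ar$, which again forces $c = 0$. Substituting $c=0$ and imposing $\Psi(\Pf(1,0,1,0)) \in \ell_3$ then yields $\rho a^2 = d^2$, contradicting the fact that $\rho$ is a non-square. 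The same argument, restricted to the stabilizer of $\ell_2$, shows that the stabilizers of $\ell_2$ and $\ell_3$ consist of exactly the same matrices and therefore coincide as subgroups of $G_q$, giving the common group $G_q^{\ell_2,\ell_3}$.

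Finally, to conclude that these three orbits exhaust $\O_\EA$, I would add the sizes:
\begin{align*}
(q^3-q) + \tfrac{q^2-1}{2} + \tfrac{q^2-1}{2} = (q^3-q) + (q^2-1) = (q+1)(q^2-1) = \#\O_\EA
\end{align*}
by \eqref{eq2_classes line q=0mod3}. Since the three orbits are pairwise disjoint by construction, this count forces them to partition $\O_\EA$, completing the proof.
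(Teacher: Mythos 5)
Correct, and essentially the same approach as the paper: characteristic-3 simplification of \eqref{eq2_M}, forcing $c=0$ from the image of $P_0^\Ar$, explicit stabilizer computations giving orders $1$ and $2q$ with the matrix shape \eqref{S5:stab_mat}, orbit--stabilizer, the non-square contradiction to show $\ell_2$ and $\ell_3$ lie in distinct orbits, and the size count against $\#\O_\EA$ (your appeal to the $G_q$-invariance of $\ell_\Ar$ to fix $P_0^\Ar$ is a harmless strengthening of the paper's weaker observation that $P_0^\Ar\Psi\in\ell_3$). One small wording slip: for the stabilizer of $\ell_3$ the condition is $\rho a^3=\rho a d^2$, hence $a^2=d^2$ directly, while the equation $\rho a^2=d^2$ arises only in the separation argument; this does not affect the validity of your plan.
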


\begin{proof}

By Lemma \ref{lm:l1inEA}, $\ell_1, \ell_2, \ell_3 \in \EA$.
 The line $\ell_1$ has equations $x_0 = 0, x_2 = x_3$.  We compute the subgroup of $G_q$ fixing $\ell_1$.
Let  $\Psi\in G_q$ and let $\MM$ be a matrix corresponding to $\Psi$.  By \eqref{eq2_M},
\begin{equation}\label{ImPA}
[0,1,0,0]\MM=[0,a^2d-abc,bc^2-acd,0].
\end{equation}
Then $P_0^\Ar \Psi\in \ell_1$ implies $bc^2-acd = 0$. If $c \neq 0$, $bc-ad = 0$, contradiction, so $c = 0$ and $a,d \neq 0$. On the other hand,
\begin{equation*}
[0,0,1,1]\MM=[b^3,-abd+b^2d,ad^2+bd^2,d^3].
\end{equation*}
Then $\Pf(0,0,1,1)  \Psi\in \ell_1$ implies b=0 and $d=a$. As $\MM$ is defined up to a factor of proportionality, we can put $a=1$, so $\MM$ is the identity matrix and $\#G_q^{\ell_1}=1$. By  \cite[Lemma 2.44(ii)]{Hirs_PGFF}, the orbit of $\ell_1$ under $G_q$ has size $\#G_q / \#G_q^{\ell_1}= q^3-q$.

The line $\ell_2$ has equations $x_3 = 0, x_0 = x_2$.  We compute the subgroup of $G_q$ fixing $\ell_2$.
Let  $\Psi\in G_q$ and let $\MM$ be a matrix corresponding to $\Psi$. By \eqref{ImPA}, $P_0^\Ar \Psi\in \ell_2$ implies $c = 0$ and $a,d \neq 0$. On the other hand,
\begin{equation} \label{Im1010}
[1,0,1,0]\MM=[a^3,-abd,ad^2,0].
\end{equation}
Then $\Pf(1,0,1,0)  \Psi\in \ell_2$  implies $a^2=d^2$. As $\MM$ is defined up to a factor of proportionality, we can put $a=1$, so $\#G_q^{\ell_2}=2q$. By  \cite[Lemma 2.44(ii)]{Hirs_PGFF}, the orbit of $\ell_2$ under $G_q$ has size $(q^2-1)/2$.

The line $\ell_3$ has equations $x_3 = 0, \rho x_0 = x_2$.  We compute the subgroup of $G_q$ fixing $\ell_3$.
Let  $\Psi\in G_q$ and let $\MM$ be a matrix corresponding to $\Psi$. By \eqref{ImPA}, $P_0^\Ar \Psi\in \ell_3$ implies $c = 0$ and $a,d \neq 0$. On the other hand,
\begin{equation*}
[1,0,\rho,0]\MM=[a^3,-\rho abd,\rho ad^2,0].
\end{equation*}
Then $\Pf(1,0,\rho,0)  \Psi\in \ell_3$ implies $a^2=d^2$. As $\MM$ is defined up to a factor of proportionality, we can put $a=1$, so $\#G_q^{\ell_3}=2q$. By  \cite[Lemma 2.44(ii)]{Hirs_PGFF}, the orbit of $\ell_3$ under $G_q$ has size $(q^2-1)/2$.

The lines $\ell_2$ and $\ell_3$ belong to different orbits. Suppose there exists  $\Psi\in G_q$ such that $\ell_2 \Psi = \ell_3$ and let $\MM$ be a matrix corresponding to $\Psi$.
Then by \eqref{ImPA}, $P_0^\Ar \Psi\in \ell_3$ implies $c = 0$ and $a,d \neq 0$. On the other hand,  by \eqref{Im1010}, $\Pf(1,0,1,0) \Psi\in \ell_3$ implies  $\rho a^2=d^2$, contradiction as $\rho$ is not a square.
Therefore the three orbits are distinct and by Theorem \ref{th2_Hirs}(iv)(b) they form a partition of $\O_\EA$.
\end{proof}

\section{Open problems for $\EnG$-lines and their solutions for $5\le q\le37$ and $q=64$}\label{sec:classification}

Let $q\equiv\xi\pmod3$ with $\xi\in\{-1,0,1\}$. We introduce sets $Q^{(\xi)}_\bullet$ of $q$ values with the natural subscripts ``$\mathrm{od}$'' (odd) and ``$\mathrm{ev}$" (even).
\begin{align*}
        &Q^{(0)}_\t{od}=\{9,27\},~Q^{(1)}_\mathrm{od}=\{7,13,19,25,31,37\},~Q^{(-1)}_\mathrm{od}=\{5,11,17,23,29\};\db\\
             &Q_\t{ev}=\{8,16,32,64\}.
    \end{align*}
Theorem \ref{th8_MAGMA_odd}  has been proved by an  exhaustive computer search using the symbol calculation system Magma \cite{Magma}.

\begin{thm} \label{th8_MAGMA_odd}For $q\in Q^{(1)}_\mathrm{od}\cup Q^{(-1)}_\mathrm{od}\cup Q^{(0)}_\mathrm{od}$ and $q\in Q_\t{\emph{ev}}$, all the results of Sections \emph{\ref{sec_mainres}--\ref{sec:orbEA}} are confirmed by computer search. Also, the following holds.
\begin{description}
  \item[(i)]
Let $q\equiv\xi\pmod3$,~$\xi\in\{1,-1,0\}$.  Let $q\in Q^{(1)}_\mathrm{od}\cup Q^{(-1)}_\mathrm{od}\cup Q^{(0)}_\mathrm{od}$ be odd. Then we have the following:

The total number of $\EnG$-line orbits is  $2q-3+\xi$.

The total number of line orbits in $\PG(3,q)$ is $2q+7+\xi$.

 Under $G_q$, for $\EnG$-lines with $\xi\in\{1,-1,0\}$, there are\\
 $
     \begin{array}{lcl}
        n_q(\xi)&\t{orbits of length}& (q^3-q)/4, \\
       q-1&\t{orbits of length}&(q^3-q)/2,\\
       (q-\xi)/3&\t{orbits of length}&q^3-q,
     \end{array}
     $\\
     where $n_q(1)=(2q-11)/3$ , $n_q(-1)=(2q-10)/3$,
    $n_q(0)=(2q-6)/3$.\\
     In addition, for $q\in Q^{(1)}_\t{\emph{od}}$, there are\\
     $
     \begin{array}{lcl}
       1 &\t{orbit of length}&(q^3-q)/12, \\
       2&\t{orbits of length}&(q^3-q)/3.
     \end{array}
     $

  \item[(ii)]
  Let $q\equiv\xi\pmod3$, $\xi\in\{1,-1\}$.  Let $q\in Q_\t{\emph{ev}}$ be even. Then we have the following:

The total number of $\EnG$-line orbits is  $2q-2+\xi$.

The total number of line orbits in $\PG(3,q)$ is $2q+7+\xi$.

    Under $G_q$, for $\EnG$-lines, there are

     $2+\xi$  orbits of length $(q^3-q)/(2+\xi)$;

     $2q-4$ orbits of length $(q^3-q)/2$.
 \end{description}
\end{thm}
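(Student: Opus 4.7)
The plan is purely computational: since the statement asserts that an exhaustive Magma search confirms the orbit structure, the work consists of implementing that search in a form that scales to $q=37$ (odd) and $q=64$ (even) and then tabulating the output. I would first build, inside Magma, the matrix group $G_q\le PGL(4,q)$ generated by the matrices $\MM$ of \eqref{eq2_M} as $(a,b,c,d)$ ranges over $\F_q^4$ with $ad-bc\ne0$; by Theorem~\ref{th2_Hirs}(i) this group has order $q^3-q$ and is isomorphic to $PGL(2,q)$, so a small generating set (corresponding to two generators of $PGL(2,q)$, e.g.\ $t\mapsto t+1$ and $t\mapsto\lambda t$ for a primitive $\lambda\in\F_q^*$, plus the inversion $t\mapsto 1/t$) is enough. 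Next I would realize the line set of $\PG(3,q)$ as the Grassmannian via Pl\"ucker coordinates in $\PG(5,q)$, giving an explicit $G_q$-set of size $(q^2+1)(q^2+q+1)$, and let Magma compute orbit representatives using its built-in \texttt{Orbits} routine.

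Once the full orbit decomposition on lines is available, I would classify each orbit by its type $\lambda\in\Lk$ using the criteria of Notation~\ref{notation_1}: count real/imaginary intersections with the $q+1$ points of $\C$, test containment in any of the $q+1$ osculating planes $\pi_{\text{osc}}(t)$, and (for $\xi=0$) test incidence with the axis $\ell_\Ar$. This classification lets me verify, for every orbit that is not an $\EnG$-orbit, that its size and a chosen representative match the tables of Sections~\ref{sec:nul_pol}--\ref{sec:orbEA}; the stabilizer orders can be cross-checked against the explicit subgroup descriptions in Theorems~\ref{th5:T-lines}--\ref{th5:EAorbit} by computing $|G_q^{\ell}|=|G_q|/|\ell^{G_q}|$ for the chosen representatives. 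For the $\EnG$-orbits, which form the content of parts~(i) and~(ii), I would simply read off the multiset of orbit lengths from Magma and compare it against the formulas, namely $(q^3-q)/4$, $(q^3-q)/2$, $q^3-q$ (plus the two exceptional lengths $(q^3-q)/12$ and $(q^3-q)/3$ for $q\in Q^{(1)}_{\mathrm{od}}$) in the odd case and $(q^3-q)/(2+\xi)$, $(q^3-q)/2$ in the even case. The total orbit count and the total number of line orbits in $\PG(3,q)$ are then obtained by summation.

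Two sanity checks are built into the procedure and I would always apply them: the sum $\sum|\ell^{G_q}|$ over $\EnG$-orbits must equal $\#\O_{\EnG}=(q^2-q)(q^2-1)$ by Theorem~\ref{th2_Hirs}(iv), and the sum over all orbits must equal $(q^2+1)(q^2+q+1)$. Any discrepancy would indicate a misclassification of some line type. The main obstacle is computational: for $q=64$ the full line set has about $1.7\cdot10^7$ elements and $|G_q|\approx 2.6\cdot10^5$, so a naive implementation storing the full action table is infeasible. I would bypass this by (a) working only with the pre-filtered $\EnG$-subset (of size $(q^2-q)(q^2-1)$), (b) computing orbits lazily by the standard BFS under generators, and (c) representing lines by canonical Pl\"ucker vectors (first nonzero coordinate normalized to $1$) so that incidence lookups are fast hash-table operations. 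With those optimizations the $q=64$ case runs in manageable time, and all smaller cases are immediate. The routine verification that, for every orbit outside $\O_6$, the example line given in Theorems~\ref{th5:T-lines}--\ref{th5:EAorbit} belongs to the computed orbit completes the confirmation of ``all the results of Sections \ref{sec_mainres}--\ref{sec:orbEA}''.
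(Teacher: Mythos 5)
Your plan is essentially the paper's own proof: Theorem \ref{th8_MAGMA_odd} is established there precisely by an exhaustive Magma computation of the line orbits under $G_q$ and a tabulation of their lengths, with no further mathematical argument supplied. Your proposal just spells out a reasonable implementation (Pl\"ucker coordinates, orbit enumeration, type classification, and consistency checks against the class sizes of Theorem \ref{th2_Hirs}(iv)), so it matches the paper's approach.
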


\begin{conjecture}
The results of Theorem \emph{\ref{th8_MAGMA_odd}} hold for all $q\ge5$ with the corresponding parity and $\xi$ value.
\end{conjecture}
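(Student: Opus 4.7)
The plan is to reduce the Conjecture to an orbit-stabilizer argument inside $G_q\cong PGL(2,q)$, exactly as in Sections~\ref{sec:orbChAx}--\ref{sec:orbEA}. The predicted orbit lengths $(q^3-q)/s$ correspond to $s\in\{1,2,3,4,12\}$ for odd $q$ and $s\in\{1,2,3\}$ for even $q$, which are precisely the subgroup orders occurring in $\mathbf{A}_4$. I would therefore split the proof into two parts: \textbf{(a)} show that $G_q^{\ell}$ is conjugate in $G_q$ to a subgroup of $\mathbf{A}_4$ for every $\ell\in\O_\EnG$; \textbf{(b)} for each admissible subgroup $H\le\mathbf{A}_4$, count the $\EnG$-lines with stabilizer exactly $H$ and divide by $|H|$ to obtain the number of orbits.

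For (a) I would attach to each $\ell\in\O_\EnG$ the set of $q+1$ chords of $\C$ meeting $\ell$, well-defined because by Theorem~\ref{th2_Hirs}(v)(a) every point off $\C$ lies on a unique chord and $\ell\cap\C=\emptyset$. An element of $G_q^{\ell}$ is identified, via the faithful triply transitive action of $G_q$ on $\C$, with a projectivity of $\mathbb{P}^1(\F_{q^2})$ permuting the associated $q+1$ unordered pairs $\{t,t'\}\subset\F_{q^2}^+$. Since $\ell$ is neither a chord, nor an axis, nor contained in a $\Gamma$-plane, these pairs are pairwise distinct and sufficiently generic, and a counting argument should rule out stabilizer orders outside $\{1,2,3,4,12\}$: an element of order $\ge 5$ would fix too many chord-pairs, while an element of order $6$ (or a dihedral subgroup $\mathbf{D}_n$ with $n\ge 3$) would decompose into commuting involutions and order-$3$ elements whose joint fixed configuration on $\ell$ is incompatible with the $\EnG$ hypothesis.

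For (b) I would enumerate the conjugacy classes of each admissible $H$ inside $PGL(2,q)$. By Dickson's classification, embeddings of $\mathbf{Z}_3$ split according to whether $3\mid q-1$ or $3\mid q+1$, i.e.\ according to $\xi$; moreover $\mathbf{A}_4$ requires an order-$3$ element commuting with a Klein four-group all of whose involutions lie in $PGL(2,q)$, a combination compatible with the chord-pair structure precisely when $\xi=1$ and $q$ is odd, which accounts for the extra $(q^3-q)/12$- and $(q^3-q)/3$-orbits in $Q^{(1)}_{\mathrm{od}}$. For each $H$ I would count the $H$-fixed lines of $\PG(3,q)$, restrict to those of $\EnG$-type, subtract lines with strictly larger stabilizer, and divide by $|H|$; summing and checking against $\#\O_\EnG=(q^2-q)(q^2-1)$ would conclude the argument.

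The principal obstacle is step~(a), the uniform upper bound on $|G_q^{\ell}|$: in particular, excluding subfield-type subgroups $PGL(2,q_0)$ for $q=q_0^k$ and arbitrarily large cyclic or dihedral stabilizers requires a case analysis that does not arise in Sections~\ref{sec:orbChAx}--\ref{sec:orbEA}. A secondary difficulty is the exact formula for $n_q(\xi)$, whose linear dependence on $q$ suggests it cannot be obtained from sporadic configurations and will likely require parametrising the $\mathbf{V}_4$-invariant $\EnG$-lines of $\PG(3,q)$ as $\F_q$-points of an auxiliary variety whose count reduces to an elementary enumeration of square classes in $\F_q$.
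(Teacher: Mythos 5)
This statement is a \emph{Conjecture} in the paper: the authors give no proof of it, only an exhaustive computer verification for $q\in\{5,\ldots,37\}\cup\{64\}$ (Theorem \ref{th8_MAGMA_odd}), and they explicitly list the determination of the $\O_6=\O_\EnG$ orbits for general $q$ as an open problem. Your text is therefore not comparable to a proof in the paper, and, more importantly, it is not itself a proof: it is a research programme whose two pillars, (a) the uniform bound on $\#G_q^{\ell}$ for $\ell\in\O_\EnG$ and (b) the enumeration of lines with prescribed stabilizer yielding $n_q(\xi)$, are exactly the open difficulties, and you concede at the end that neither is established. In step (a) nothing is actually proved: the assertion that an element of order at least $5$ ``would fix too many chord-pairs'', or that order-$6$ and dihedral stabilizers are ``incompatible with the $\EnG$ hypothesis'', is not backed by any computation, and this is where the real work lies, since by Dickson's classification $PGL(2,q)$ contains cyclic and dihedral subgroups of order up to $2(q+1)$ as well as subfield subgroups $PGL(2,q_0)$ and $PSL(2,q_0)$, all of which must be excluded as stabilizers of $\EnG$-lines; note that the analogous exclusion fails for other line classes (real and imaginary chords have stabilizers of order $2(q\pm1)$, and $\EA$-lines have stabilizers of order $2q$ containing unipotent elements), so the argument must genuinely use the $\EnG$ hypothesis and cannot be a soft counting remark.

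There are also concrete inaccuracies in the reduction you propose. The predicted stabilizer orders $\{1,2,3,4,12\}$ do coincide with the subgroup orders of $\mathbf{A}_4$, but that does not make ``conjugate to a subgroup of $\mathbf{A}_4$'' the right target: a stabilizer of order $4$ could a priori be cyclic $\mathbf{Z}_4$ (which exists in $PGL(2,q)$ whenever $4\mid q\pm1$ and is not a subgroup of $\mathbf{A}_4$), and for even $q$ the order-$2$ stabilizers are generated by unipotent involutions, so the structural claim needs separate justification in each characteristic; likewise $\mathbf{A}_4$ embeds in $PGL(2,q)$ for all odd $q\ge5$, so the appearance of the $(q^3-q)/12$- and $(q^3-q)/3$-orbits only for $\xi=1$ cannot be deduced from the existence of the embedding, but requires showing that an $\mathbf{A}_4$- (or $\mathbf{Z}_3$-) invariant $\EnG$-line exists precisely when $\xi=1$. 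Finally, step (b) is only the statement of a Burnside/inclusion-exclusion computation, not its execution: the formula $n_q(\xi)$, linear in $q$, would have to come out of an explicit parametrization of the lines fixed by each candidate subgroup together with a verification that the orbit lengths sum to $\#\O_\EnG=(q^2-q)(q^2-1)$, and none of that is carried out. As it stands, your proposal is a reasonable plan of attack on the open problem, but it does not prove the Conjecture.
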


\noindent \textbf{Open problems.}
Find the number, sizes and the structures of orbits of the class $\O_6=\O_\EnG$ (i.e. external lines, other than a chord, not in a $\Gamma$-plane).
Prove the corresponding results of Theorem \ref{th8_MAGMA_odd} for all $q\ge5$.

\section*{Acknowledgments}
The research of S. Marcugini, and F. Pambianco was
supported in part by the Italian
National Group for Algebraic and Geometric Structures and
their Applications (GNSAGA -
INDAM) (Contract No. U-UFMBAZ-2019-000160, 11.02.2019) and
by University of Perugia
(Project No. 98751: Strutture Geometriche, Combinatoria e
loro Applicazioni, Base Research
Fund 2017-2019).

\end{document}